\newtheorem{theorem}{Theorem}[section]
\newtheorem{lemma}[theorem]{Lemma}
\newtheorem{conjecture}[theorem]{Conjecture}
\newtheorem{question}[theorem]{Question}
\newtheorem{proposition}[theorem]{Proposition}
\theoremstyle{definition}
\newtheorem{example}[theorem]{Example}
\newtheorem{definition}{Definition}[section]
\newtheorem{observation}[theorem]{Observation}
\begin{document}

\title{{\bf The constant objective value property for 
combinatorial optimization problems}}
\author{
\sc Ante \'{C}usti\'{c}\thanks{{\tt custic@opt.math.tugraz.at}.
Institut f\"ur Optimierung und Diskrete Mathematik, TU Graz, 
Steyrergasse 30, A-8010 Graz, Austria}
\and
\sc Bettina Klinz\thanks{{\tt klinz@opt.math.tugraz.at}.
Institut f\"ur Optimierung und Diskrete Mathematik, TU Graz, 
Steyrergasse 30, A-8010 Graz, Austria}
}
\date{July 2014}
\maketitle

\begin{abstract}
Given a combinatorial optimization problem, we aim at characterizing 
the set of all instances for which every feasible solution has the 
same objective value. 

Our central result deals with multi-dimensional assignment problems. 
We show that for the axial and for the planar $d$-dimensional assignment
problem instances with constant objective value property are characterized by 
sum-decomposable arrays. We provide a counterexample to show that the result
does not carry over to general $d$-dimensional assignment problems.

Our result for the axial $d$-dimensional assignment problem can be shown 
to carry over to the axial $d$-dimensional transportation problem. Moreover, we 
obtain characterizations when the constant objective value property 
holds for the minimum spanning tree
problem, the shortest path problem and the minimum weight maximum cardinality 
matching problem.

\medskip\noindent\emph{Keywords.}
Constant objective value; admissible transformation; 
multi-dimensional assignment problem; sum-decomposable array.
\end{abstract}

%%%%%%%%%%%%%%%%%%%%%%%%%%%%%%%%%%%%%%

\section{Introduction}\label{sec:intro}

In this paper we deal with combinatorial optimization problems 
of the following type.  We are given a ground set $E=\{1,\ldots,n\}$, 
a real cost vector
$C=(c(1),\ldots,c(n))$ and a set of feasible solutions 
$\mathcal{F}\subseteq 2^{\{1,\ldots,n\}}$. The \emph{objective value\/} of a feasible solution 
$F\in \mathcal{F}$ is given by the so-called \emph{sum objective function}
\[c(F):=\sum_{i\in F}c(i).\]
The goal is to find a feasible solution $F^*$ such that $c(F^*)$ is minimal. The traveling salesman problem,  the linear assignment problem,  the
shortest path problem, Lawler's quadratic assignment problem and many other
well-known combinatorial optimization problems  fall into this class 
of problems.

\begin{definition}
We say that an instance of a combinatorial optimization problem has the 
\emph{constant objective value property} (COVP) if every feasible solution has 
the same objective value.
\end{definition}

Our goal is to
characterize the set of instances with the COVP, or in other words,
the space of all  cost vectors for which every feasible solution has 
the same objective value, for various combinatorial optimization problems.
\medskip

{\bf Related results.}
The constant objective value property is closely connected to the notion of 
\emph{admissible transformations} introduced in 1971 by Vo-Khac~\cite{V71}. 

\begin{definition}
 A transformation $T$ of the cost vector $C$ to the new cost vector 
$\tilde{C}=(\tilde{c}(1),\tilde{c}(2),\ldots,\tilde{c}(n))$ is called 
admissible with index $z(T)$, if
\[c(F)=\tilde{c}(F)+z(T)\quad \text{for all }F\in\mathcal{F}.\]
\end{definition}

Note that admissible transformations preserve the relative order of the 
objective values of all feasible solutions. 
It is well known that admissible transformations can be used as optimality
criterion and to obtain lower bounds which are useful for hard combinatorial
optimization problems. Namely, consider the combinatorial optimization problem $\min_{F\in{\cal F}} c(F)$.
Let $T$ be an admissible transformation with index $z(T)$ from the original cost vector $C$ to
the new cost vector $\tilde{C}$ such that there exists a 
feasible solution $F^*$ with the following properties:
\begin{itemize}
	\item[(i)] $\tilde{c}(i)\geq 0$\ \ \  for all $i\in\{1,\ldots,n\}$,
	\item[(ii)] $\tilde{c}(F^*)=0$.
\end{itemize}
Then $F^*$ is an optimal solution with objective value $z(T)$. 
If the condition (ii) is not satisfied or we cannot prove that it holds, 
then $z(T)$ gives a lower bound.

For the class of combinatorial optimization problems with sum objective function
there is a one-to-one correspondence between admissible 
transformations that transform the cost vector 
$(c(1),\ldots,c(n))$ to $(\tilde{c}(1),\ldots,\tilde{c}(n))$, 
and cost vectors $B=(b(1),b(2),\ldots,b(n))$ that fulfill the  COVP. 
The correspondence  is obtained by 
$c(i)=\tilde{c}(i)+b(i)$ for all $i$. Then the index of the
corresponding admissible transformation is $z(T)=\sum_{i\in F}b(i)$ 
for any $F\in\mathcal{F}$. The correspondence between the COVP and admissible transformations provides a
further source of motivation for investigating COVP characterizations.

The notion of admissible transformations can be generalized to the algebraic 
setting and applied to a wider class of combinatorial optimization problems,
including the case of bottleneck objective functions, see \cite{BZ82}.
Note, however, that for the bottleneck objective function, which is given by
$c(F)=\max_{i\in F}c(i)$, there is no one-to-one correspondence between the
COVP and admissible transformations.

Berenguer~\cite{B79} characterized the set of all admissible transformations 
for the travelling salesman problem (TSP) and the multiple salesmen version. 
All admissible transformations for the TSP are obtained by adding real 
values to rows and columns of the distance matrix.
In view of the correspondence mentioned above this result can be rephrased 
as a result on the COVP for the TSP as follows (this has been noted already
by Gilmore, Lawler and Shmoys~\cite{GLS85}).

An $n\times n$ real matrix $C=(c_{ij})$ is called {\em sum matrix\/}
if  there exist two real $n$-dimensional 
vectors $U=(u_i)$ and $V=(v_i)$ such that 
\begin{equation}\label{summatrix}
c_{ij}=u_i+v_j     \qquad\mbox{for all~~} i,j\in\{1,\ldots,n\}.
\end{equation}

\begin{theorem}[Berenguer~\cite{B79}, 
Gilmore et al.~\cite{GLS85}]\label{thm:tsp}

The TSP instance with the $n\times n$ cost matrix $C=(c_{ij})$  
has the COVP if and only if $C$ is a sum matrix.
\end{theorem}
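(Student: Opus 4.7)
The plan is to handle the two directions of the equivalence separately. For \emph{sufficiency}, viewing a Hamiltonian tour as a cyclic permutation $\pi$ of $\{1,\ldots,n\}$, a direct computation gives
\[
c(\pi)=\sum_{i=1}^{n}c_{i,\pi(i)}=\sum_{i=1}^{n}u_i+\sum_{j=1}^{n}v_j,
\]
which is independent of $\pi$, so the COVP holds.

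For \emph{necessity}, assume the COVP. My goal is to derive the $2\times 2$ Monge-type relation
\[
c_{ab}+c_{a'c}=c_{ac}+c_{a'b}\qquad(\star\star)
\]
for every quadruple of pairwise distinct indices $a,a',b,c$. Once $(\star\star)$ is available in this generality, one recovers the sum matrix structure in the standard way: fix a reference row $r$ and column $s$, set $u_i:=c_{is}-c_{rs}$ and $v_j:=c_{rj}$, and invoke $(\star\star)$ to verify $c_{ij}=u_i+v_j$ on every off-diagonal position; since diagonal entries never enter a tour cost, they may be adjusted so that the identity extends to all of $C$.

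The key step is extracting $(\star\star)$ from the COVP, and the plan is to do it in two substeps. First, I would obtain a three-term identity by a local \emph{swap of two consecutive vertices} inside a tour. Given four distinct vertices $a,b,c,d$, construct a Hamiltonian tour $\pi$ in which $a,b,c,d$ occupy four consecutive positions in that order (possible whenever $n\geq 4$, by placing the remaining $n-4$ vertices arbitrarily after $d$), and let $\pi'$ be obtained from $\pi$ by interchanging the positions of $b$ and $c$; then $\pi'$ is again Hamiltonian, contains the subpath $a\to c\to b\to d$, and agrees with $\pi$ on all other arcs. Comparing the two costs gives
\[
c_{ab}+c_{bc}+c_{cd}=c_{ac}+c_{cb}+c_{bd}.\qquad(\star)
\]
For $n\geq 5$, I then choose a fifth vertex $a'\notin\{a,b,c,d\}$ and write $(\star)$ a second time with $a$ replaced by $a'$; subtracting the two instances cancels the common terms $c_{bc},c_{cd},c_{cb},c_{bd}$ and yields exactly $(\star\star)$. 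Because $d$ plays only the role of a witness, varying the remaining indices recovers $(\star\star)$ for every quadruple of distinct rows and columns.

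The main obstacle I anticipate is the small cases $n\in\{3,4\}$, where no free witness is available for the subtraction. For these I would fall back on direct enumeration of the $(n-1)!$ Hamiltonian tours. For $n=3$ the single equation $c_{12}+c_{23}+c_{31}=c_{13}+c_{21}+c_{32}$ produced by the COVP is already the unique defining relation of an off-diagonal sum matrix on three vertices. For $n=4$ the six tours yield five linearly independent relations on the twelve off-diagonal entries; since the space of $4\times 4$ off-diagonal sum matrices has codimension five, a short direct elimination (conveniently carried out after normalizing $c_{1j}=c_{i1}=0$) shows that these relations force $C$ to be a sum matrix, completing the proof.
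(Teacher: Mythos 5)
Your proof is correct, and it takes a genuinely different route from the one the paper relies on: the paper does not prove Theorem~\ref{thm:tsp} at all, but cites Berenguer's characterization of admissible transformations and the linear-algebraic technique of Gilmore, Lawler and Shmoys and of Lenstra and Rinnooy Kan, in which one compares the dimension of the solution space of the linear system ``all tour costs are equal'' with the dimension of the space of sum matrices via a rank computation --- exactly the technique the paper reuses for the planar case $n=3$ in Theorem~\ref{thm:planar}; the LP-duality shortcut sketched in the introduction is claimed only for the linear assignment problem, not for the TSP. Your argument --- the swap of two consecutive cities giving the three-term identity $(\star)$, a fifth witness vertex whose subtraction yields the Monge relation $(\star\star)$, and explicit enumeration for $n=3,4$ --- is an elementary exchange argument, closest in spirit to the paper's own proof of Theorem~\ref{thm:axial}, where pairs of feasible solutions agreeing outside a small set are compared. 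The trade-off: your route is self-contained and uses only a handful of tours, while the rank/dimension count avoids small-case analysis and scales to settings where explicit exchanges are awkward (which is why the paper needs it for the $(d,d-1)$-AP with $n=3$). Your small cases do check out: for $n=3$ the single tour equation cuts out exactly the codimension-one space of off-diagonal sum matrices, and for $n=4$, after normalizing the first row and column to zero, the six equal tour sums force all remaining off-diagonal entries to one common value, which is again a sum matrix. One detail to repair in the reconstruction step: with reference row $r$ and column $s$, your formulas give $u_s=c_{ss}-c_{rs}$ and $v_r=c_{rr}$, which involve diagonal entries that the COVP does not constrain, and the corresponding verifications would need instances of $(\star\star)$ with repeated indices; instead define $u_s:=c_{sj_0}-c_{rj_0}$ and $v_r:=c_{i_0r}-u_{i_0}$ for some $i_0,j_0\notin\{r,s\}$ and check consistency with two further applications of $(\star\star)$, which is possible since $n\ge 4$. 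With that routine fix, and with the paper's standing convention that the diagonal of $C$ is irrelevant and may simply be set to match the decomposition, your argument is complete.
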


For the TSP the diagonal entries of $C$ do not play a role and can be ignored.
Berenguer's proof works for the linear assignment problem as well, i.e.\
an instance of the linear assignment problem with cost matrix $C=(c_{ij})$ 
has the COVP if and only if $C$ is a sum matrix.

Some classes of admissible transformations for different types of 
assignment problems are listed by Burkard~\cite{B07}. However, no COVP 
characterizations are provided.

We remark that there is a simpler way to prove the COVP characterization for the
linear assignment problem mentioned above by making use of
the LP-duality and the complementary slackness condition. Since for each pair 
$(i,j)$ there exists an assignment which assigns $i$ to $j$ (i.e.\@ the primal
assignment variable $x_{ij}$ is 1), all dual constraints need to be fulfilled
with equality which is equivalent to the condition \eqref{summatrix} 
(note that the vectors $U$ and $V$ contain the dual variables). 
\medskip

{\bf Results and organization of the paper.}\@
In Section~\ref{section:assignment} we investigate the problem of 
characterizing the instances with the COVP for multi-dimensional assignment 
problems. We show that for the multi-dimensional
axial and planar case the cost arrays with the COVP
are precisely the class of sum-decomposable arrays which are generalizations of sum matrices (for the precise definition see Section~\ref{section:assignment}).
We furthermore provide a counterexample which shows that sum-decomposability is
not necessarily required for the COVP to hold for general multi-dimensional 
assignment problems.

In Section~\ref{section:transportation} the result for the 
axial $d$-dimensional assignment problem is carried over to the 
axial $d$-dimensional transportation problem. 
Finally, in Section~\ref{section:other} we 
deal with COVP characterizations for the minimum spanning tree problem, the
shortest path problem and the minimum weight maximum cardinality 
matching problem.

%%%%%%%%%%%%%%%%%%%%%%%%%%%%%%%%%%%%%%

\section{The COVP for $d$-dimensional 
assignment\\ problems}
\label{section:assignment}

Berenguer's result for the classical linear assignment problem motivated us to
ask for COVP characterizations for multi-dimensional assignment problems.

\subsection{General multi-dimensional assignment problems} 

Two classical ways of generalizing the notion of assignments to 
three dimensions
are the so-called \emph{axial 3-dimensional \textup{(}or 3-index\textup{)} assignment problem} 
and the \emph{planar 3-dimensional \textup{(}or 3-index\textup{)} assignment problem}.
A further generalization is obtained by the class of $d$-dimensional assignment
problems defined in the sequel. For more on the topic of assignment problems 
see the book by Burkard et al.~\cite{BDM09} and the references cited therein.

A general multi-dimensional assignment problem is specified by two 
parameters $d$ and $s$, where $d$ is the number of indices and $s$ describes 
the number of fixed indices in the constraints. 
Informally speaking, we want to  find a set of $n^s$
elements of a $d$-dimensional $n\times n\times \cdots \times n$ array $C$ with 
minimal total sum, such that for every 
$s$ fixed indices of $C$ exactly one element
is chosen. 

Formally, the $(d,s)$ assignment problem, $(d,s)$-AP for short, can be 
stated in the following way.

\begin{definition}
Let $d$ and $s$ be integers with $0<s<d$. The input of the 
$(d,s)$-AP consists of an integer $n\ge 1$ and a $d$-dimensional 
$n\times n\times \cdots \times n$ cost array $C$ which associates 
the cost $c(i_1,i_2,\ldots,i_d)$ to the $d$-tuple 
$(i_1,i_2,\ldots,i_d)\in \{1,\ldots,n\}^d$. Let
$\mathcal{Q}_s$ be the set of all subsets of $K=\{1,\ldots,d\}$ 
with cardinality $s$, 
i.e.\@ $\mathcal{Q}_s=\{Q\colon Q\subset K, |Q|=s\}$. 
For any set $Q=\{q_1,q_2,\ldots,q_s\}\in\mathcal{Q}_s$ of fixed indices with
$q_1<q_2< \cdots <q_s$ and any $s$-tuple
$t=(t_1,\ldots,t_s)\in\{1,\ldots,n\}^s$, let $T(Q,t)$ be the set of all 
$d$-tuples $t'=(t'_1,\ldots,t'_d)\in \{1,\ldots,n\}^d$
such that $t'_{q_j}=t_j$ for all $j=1,\ldots,s$.
The general $(d,s)$ assignment problem $(d,s)$-AP can be stated 
as
\begin{align}
	\min\sum_{i_1=1}^n\cdots\sum_{i_d=1}^n c(i_1,i_2,\ldots,i_d) x(&i_1,i_2,\ldots,i_d) \label{dsAP}\\
	\text{s.t. } \sum_{\substack{(i_1,\ldots,i_d)\in\\  T(Q,(j_1,\ldots,j_{s}))}} x(i_1,i_2,\ldots,i_d)=1 &\text{ for all } Q\in \mathcal{Q}_s\nonumber\\[-2em]
	&   \text{ and all }(j_{1},\ldots,j_{s})\in \{1,\ldots,n\}^s,\nonumber\\[0.5em]
	x(i_1,i_2,\ldots,i_d)\in\{0,1\}\   &\text{ for all } (i_1,i_2,\ldots ,i_d)\in \{1,\ldots,n\}^d.\nonumber
\end{align}
\end{definition}
Note that in each of the equality constraints above, the sum essentially extends over $d-s$
variables (corresponding to the free indices from the  set $K\setminus Q$).

Let $X=(x(i_1,i_2,\ldots,i_d))$ be a feasible solution of the integer program
stated above. Then the set $F=\{(i_1,i_2,\ldots,i_d)\colon x(i_1,i_2,\ldots,i_d)=1\}$ is a 
feasible solution of the $(d,s)$-AP and the value 
$\sum_{(i_1,\ldots,i_d)\in F}c(i_1,\ldots,i_d)$ is the cost (objective value)
of a feasible solution $F$. Hence the $(d,s)$-AP fits into the 
class of combinatorial optimization problems with sum objective function.

Using the $(d,s)$-AP notation, the classical linear assignment problem is the 
$(2,1)$-AP, 
while the axial and the planar 3-dimensional assignment problems correspond to the 
$(3,1)$-AP and the $(3,2)$-AP, respectively. More generally, we refer to the 
$(d,1)$-AP as \emph{axial $d$-dimensional assignment problem}, and to the 
$(d,d-1)$-AP as \emph{planar $d$-dimensional assignment problem}. 
Let us remark that for $d\ge 4$, there is no consensus in the literature 
which problem version is referred to as planar $d$-dimensional assignment 
problem. Our decision to refer to the $(d,d-1)$-AP in which all constraints
involve single sums as to planar is in accordance with the axial/planar 
nomenclature which was introduced in the original paper by 
Schell~\cite{Sch55} and is used by 
Spieksma~\cite{Sp00}, Frieze and Sorkin~\cite{FS13} and others. 
A group of authors around Appa, see e.g.\@ \cite{AMM06b},
refer to the $(d,2)$-AP as to planar $d$-dimensional assignment 
problem. The axial and the planar 3-dimensional assignment problems are known to be 
NP-hard~\cite{F83,K72}. As a consequence thereof both the axial and the 
planar $d$-dimensional assignment problems are NP-hard for all $d\geq 3$.

The following observations collect a few well known facts 
about the structure of the set of feasible solutions of 
the $(d,s)$-AP. These will turn out to be helpful in later parts of the paper.

\begin{observation}\label{obs1}
Every feasible solution of the $(d,1)$-AP of size $n$ can be represented by a set of $d-1$ permutations of $\{1,\ldots,n\}$. Namely, every feasible
solution can be written as $F=\{(i,\phi_1(i),\ldots,\phi_{d-1}(i))\colon i=1,\ldots,n\}$ where each $\phi_k$ is a permutation of $\{1,\ldots,n\}$.
\end{observation}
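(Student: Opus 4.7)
The plan is to unpack the constraints of the $(d,1)$-AP directly. Setting $s=1$, the family $\mathcal{Q}_1$ consists of the $d$ singletons $\{q\}$ with $q\in\{1,\ldots,d\}$. For each such $Q=\{q\}$ and each $j\in\{1,\ldots,n\}$, the set $T(Q,(j))$ is exactly the collection of $d$-tuples whose $q$-th coordinate equals $j$. Thus the constraints of \eqref{dsAP} say that in any feasible solution $F$, for every coordinate $q\in\{1,\ldots,d\}$ and every value $j\in\{1,\ldots,n\}$, there is exactly one tuple of $F$ whose $q$-th coordinate is $j$.

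First I would use this with $q=1$: the first-coordinate projection is a bijection from $F$ to $\{1,\ldots,n\}$. In particular $|F|=n$ and there is a unique tuple $\tau_i\in F$ with first coordinate $i$, so we may define functions $\phi_1,\ldots,\phi_{d-1}\colon\{1,\ldots,n\}\to\{1,\ldots,n\}$ by letting $\tau_i=(i,\phi_1(i),\ldots,\phi_{d-1}(i))$. This immediately gives the representation $F=\{(i,\phi_1(i),\ldots,\phi_{d-1}(i))\colon i=1,\ldots,n\}$.

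Next I would verify that each $\phi_k$ is a permutation. For this I apply the same constraint with $q=k+1$: the $(k+1)$-st coordinate projection is also a bijection from $F$ to $\{1,\ldots,n\}$. Since the $(k+1)$-st coordinate of $\tau_i$ is $\phi_k(i)$, this means $\phi_k$ is a bijection from $\{1,\ldots,n\}$ onto itself, i.e., a permutation. Conversely, any choice of $d-1$ permutations evidently yields a set satisfying all the constraints.

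There is no real obstacle here; the observation is a direct unfolding of the definition of the $(d,1)$-AP and only requires taking care to match up the projection constraints with the permutation property in each coordinate separately.
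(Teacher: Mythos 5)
Your proof is correct: the paper states this observation without proof, as a well-known fact, and your direct unpacking of the $(d,1)$-AP constraints (first coordinate to index the tuples, the remaining coordinate constraints to force each $\phi_k$ to be a bijection) is exactly the intended justification. Nothing is missing.
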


\begin{observation}\label{obs2}
A set of $d$-tuples $F$ is a feasible solution of the 
$(d,d-1)$-AP of size $n$ if and only if $F$ ``contains" $n$ pairwise
disjoint feasible solutions of the $(d-1,d-2)$-AP. Namely, define $n$ sets of $(d-1)$-tuples obtained from $F$ by fixing one
 index, for example the first one: $F_i=\{(a_2,a_3,\ldots,a_d) \colon
 (i,a_2,a_3,\ldots,a_d)\in F\}$ for $i=1,2,\ldots, n$. Then every $F_i$ is a
 feasible solution of the  $(d-1,d-2)$-AP because fixing $d-2$ indices in $F_i$
 corresponds to fixing $d-1$ indices in $F$. Also, if there are $F_i$ and $F_j$
 that are not disjoint, then there would be two elements in $F$ that coincide on
 $d-1$ indices, a contradiction. The same construction 
works in the other direction as well.
\end{observation}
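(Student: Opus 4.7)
The plan is to prove both directions of the iff, the main work being to show that the constructed decomposition (or its inverse) satisfies the right feasibility constraints.

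For the ``only if'' direction I would take $F$ feasible for the $(d,d-1)$-AP and define $F_i=\{(a_2,\ldots,a_d):(i,a_2,\ldots,a_d)\in F\}$ as indicated. To show each $F_i$ is feasible for the $(d-1,d-2)$-AP I would translate a $(d-1,d-2)$-AP constraint on $F_i$ --- which fixes $d-2$ of the coordinates from $\{2,\ldots,d\}$ to prescribed values --- into a $(d,d-1)$-AP constraint on $F$ which additionally fixes coordinate $1$ to $i$. Feasibility of $F$ then yields exactly one $d$-tuple satisfying the combined constraint, hence exactly one matching $(d-1)$-tuple in $F_i$. Pairwise disjointness is immediate: if some $(a_2,\ldots,a_d)$ lay in both $F_i$ and $F_j$ with $i\ne j$, then $F$ would contain two distinct $d$-tuples agreeing on coordinates $2,\ldots,d$, violating the $(d,d-1)$-AP constraint that fixes those coordinates.

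For the ``if'' direction I would assume $F_1,\ldots,F_n$ are pairwise disjoint $(d-1,d-2)$-AP feasible solutions and set $F:=\bigcup_{i=1}^{n}\{(i,a_2,\ldots,a_d):(a_2,\ldots,a_d)\in F_i\}$. A standard count gives $|F_i|=n^{d-2}$ and hence $|F|=n\cdot n^{d-2}=n^{d-1}$, which is the size of a feasible $(d,d-1)$-AP solution. It then suffices to show that each $(d,d-1)$-AP constraint is met by at most one element of $F$, since a double counting (there are $d\cdot n^{d-1}$ constraints, and every $d$-tuple covers exactly $d$ of them) then forces equality to one. For the ``at most one'' part I would case-split on whether coordinate $1$ is the single free index of the $F$-constraint (in which case two solutions would force the same $(d-1)$-tuple into two different $F_i$'s, contradicting disjointness) or one of the $d-1$ fixed indices, say coordinate $1$ is fixed to $i$ (in which case two solutions would violate feasibility of the single $(d-1,d-2)$-AP instance $F_i$).

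The main obstacle I anticipate is the bookkeeping in the second case of the ``if'' direction: coordinate $1$ plays an asymmetric role in the decomposition, and one must carefully verify that $(d,d-1)$-AP constraints on $F$ which fix coordinate $1$ correspond to honest $(d-1,d-2)$-AP constraints on the corresponding $F_i$ after relabeling indices from $\{2,\ldots,d\}$ to $\{1,\ldots,d-1\}$. Once this translation is set up once, the remaining verifications are routine, and the counting argument streamlines what would otherwise be a longer inspection of constraints.
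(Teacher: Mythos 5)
Your proof is correct and takes essentially the same route as the paper: the same fiber decomposition $F_i$, the same translation of $(d-1,d-2)$-AP constraints into $(d,d-1)$-AP constraints with the first index fixed to $i$, and the same disjointness argument via two elements of $F$ agreeing on $d-1$ coordinates. The only difference is that you spell out the converse, which the paper dismisses with ``the same construction works in the other direction'', using $|F_i|=n^{d-2}$ and a double count of constraint incidences to upgrade ``at most one'' to ``exactly one'' --- a correct completion of that sketch.
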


\begin{observation}\label{obs3}
The feasible solutions of the $(d,2)$-AP for $d\ge 3$ correspond to 
$(d-2)$-tuples of mutually orthogonal Latin squares. Indeed, assume that $F$
is a feasible solution of the $(d,2)$-AP. We can 
represent $F$ as an $n\times n$ table $T$ with $(d-2)$-tuples as entries 
in the following way: The $(d-2)$-tuple $(i_3,\ldots,i_d)$ is the entry in 
row $i_1$ and column $i_2$ of $T$ if and only if $(i_1,\ldots,i_d)$ is an 
element of $F$. Since $F$ is a feasible solution of the $(d,2)$-AP, 
each row and each column of $T$ contain every integer from $1$ to $n$ exactly
once on the $k$-position for all $k=1,\ldots,d-2$. Moreover, each $(d-2)$-tuple
of pairwise distinct integers from $\{1,\ldots,n\}$ appears exactly once in
$T$. Hence $T$ can be interpreted as a $(d-2)$-tuple of mutually orthogonal
Latin squares (the $k$-th component of the entries of $T$ yields the 
$k$-th Latin square). Note that if $d=4$ then $T$ is a Graeco-Latin square.
\end{observation}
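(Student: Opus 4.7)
The plan is to establish the correspondence in both directions by carefully tracking what each of the $\binom{d}{2}$ families of equality constraints of the $(d,2)$-AP contributes. Given a feasible $F$, I would build the $n\times n$ table $T$ whose entry at position $(i_1,i_2)$ is the $(d-2)$-tuple $(i_3,\ldots,i_d)$ determined by the unique $d$-tuple in $F$ whose first two coordinates equal $(i_1,i_2)$; existence and uniqueness of this tuple follow immediately from the constraint associated with $Q=\{1,2\}$. Writing $L_k$ for the $n\times n$ array obtained by extracting the $(k-2)$-th component of each entry of $T$ (for $k=3,\ldots,d$), the table $T$ is by construction the coordinatewise superposition of $L_3,\ldots,L_d$.

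Next I would check that each $L_k$ is a Latin square. The constraint for $Q=\{1,k\}$ states that for every pair $(j_1,j_k)$ there is exactly one $d$-tuple in $F$ with $i_1=j_1$ and $i_k=j_k$; read off $T$, this says that each symbol in $\{1,\ldots,n\}$ appears exactly once in row $j_1$ of $L_k$. The constraint for $Q=\{2,k\}$ yields the same property for columns. Hence $L_k$ is a Latin square of order $n$.

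Then I would establish mutual orthogonality by appealing to the constraints for $Q=\{k,\ell\}$ with $3\le k<\ell\le d$: such a constraint says that for every pair $(s,t)\in\{1,\ldots,n\}^2$ there is exactly one $d$-tuple in $F$ with $i_k=s$ and $i_\ell=t$, which is precisely the statement that each ordered pair $(s,t)$ appears exactly once when $L_k$ and $L_\ell$ are superimposed. For $d=3$ the tuple collapses to a single Latin square and orthogonality is vacuous, while for $d=4$ one recovers a Graeco-Latin square.

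Finally I would verify the reverse direction by taking any $(d-2)$-tuple of pairwise orthogonal Latin squares $(L_3,\ldots,L_d)$ and setting $F=\{(i_1,i_2,L_3(i_1,i_2),\ldots,L_d(i_1,i_2)):i_1,i_2\in\{1,\ldots,n\}\}$; the same case analysis on $Q\in\mathcal{Q}_2$ shows that all equality constraints of the $(d,2)$-AP are satisfied. I don't anticipate a serious obstacle; the only thing to execute cleanly is the case split on $Q$, namely $Q=\{1,2\}$ (well-definedness of $T$), $Q=\{1,k\}$ or $Q=\{2,k\}$ (Latin square property), and $Q\subseteq\{3,\ldots,d\}$ (orthogonality).
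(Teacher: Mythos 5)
Your proposal is correct and follows essentially the same route as the paper's own argument: build the $n\times n$ table from the $Q=\{1,2\}$ constraint, read the Latin-square property off the $Q=\{1,k\}$ and $Q=\{2,k\}$ constraints, and read mutual orthogonality off the $Q=\{k,\ell\}$ constraints with $k,\ell\ge 3$. Your version is merely more explicit than the paper's sketch (notably in spelling out the converse construction and in stating orthogonality as ``every ordered pair appears exactly once'' rather than the paper's looser phrasing), but it is the same correspondence and the same case split on $Q$.
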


What distinguishes the general $(d,s)$-AP from the special cases with $s=1$
(axial problem) and with $s=d-1$ (planar problem) is that there does not need
to exist feasible solutions for every value of $n$. Furthermore, not much is known
on the structure of the set of feasible solutions for the $(d,s)$-AP for
general $n$, cf.~Appa et al.~\cite{AMM06b}.
Infeasible instances and instances with very few feasible
solutions provide clear obstacles to our intended COVP characterization. For this reason the feasibility topic
for the $(d,s)$-AP plays a role for us and we briefly review a few basic
results from the literature.

The question for which values $n$ the general $(d,s)$-AP has feasible solutions
is a very difficult problem which is still open for many combinations of 
$d$ and $s$, and is related to a number of difficult problems in combinatorics. More
precisely, the general $(d,s)$-AP of size $n$ has a feasible solution 
if there exists an $s$-transversal design with $d$ groups of size $n$, or
equivalently if there exists an orthogonal array OA($n,d,s$) of index 1, strength
$s$ and order $n$, see \cite{Co06}.

In view of Observation~\ref{obs3}, the question of existence of Graeco-Latin squares is of interest. This was a famous open problem for a long time going 
back to Euler until it was finally proved by
Bose et al.~\cite{BSP60} that Graeco-Latin squares, and thus feasible
solutions of the $(4,2)$-AP, exist for every $n\ge 3$ except for $n=6$.

The number of mutually orthogonal Latin squares of size $n$ (see Observation~\ref{obs3} for the
connection to the feasibility of the $(d,2)$-AP) is unknown for general $n$. It
is known however that this number is at most $n-1$ and that the upper bound is 
achieved if $n$ is a prime power. It is also known that there exist $n-1$
mutually orthogonal Latin squares if and only if there exists a projective
plane of order $n$, see \cite{Co06}.

\subsection{Sum-decomposable arrays}

In this subsection we investigate the vector spaces of 
\emph{sum-decomposable arrays}. These will occur as solutions of various COVP
characterizations. Sum-decomposable arrays generalize the concept of
sum matrices to higher dimensions.

Informally, a $d$-dimensional $n\times n \times \cdots  \times n$
 real array $C$ is sum-de\-compo\-sable with parameters $d$ and $s$ 
(and size $n$)
 if $C$ can be obtained as a sum of ${d \choose s}$ $s$-dimensional arrays, one
 for each subset of $\{1,\ldots,d\}$ of size $s$.
For example, in the case $d=3$ and $s=2$, $C=(c_{ijk})$ is  sum-decomposable 
if there exist three two-dimensional real arrays $A=(a_{ij})$, $B=(b_{ij})$ and 
$D=(d_{ij})$ such that $c_{ijk}=a_{ij}+b_{ik}+d_{jk}$. A formal definition follows.
\begin{definition}
Let $n$, $d$ and $s$ be integers such that $d>s>0$ and $n>1$. 
Let  
$\mathcal{Q}_s=\{Q\colon Q\subset \{1,\ldots,d\}, |Q|=s\}$. 
Then the $d$-dimensional 
$n\times n\times\cdots\times n$ real array $C$ is 
called {\em sum-decomposable with parameters $d$ and $s$ and size $n$} 
if there exist ${d \choose s}$ $s$-dimensional 
$n\times n\times\cdots\times n$ real arrays $A^{Q}=(a^Q(j_1,\ldots,j_s))$, one for each 
$Q\in \mathcal{Q}_s$, such that  \begin{equation*}
c(i_1,i_2,\ldots,i_d)=\sum_{Q\in \mathcal{Q}_s} 
a^{Q}(h_{Q}(i_1,i_2,\ldots,i_d))
\end{equation*}
where  
$h_{Q}(i_1,i_2,\ldots,i_d)$ denotes the $s$-tuple associated with $Q$, i.e.\@ 
$h_{Q}(i_1,i_2,\linebreak\ldots,i_d)=(i_{q_1},\ldots, i_{q_s})$ for 
$Q=\{q_1,\ldots$ $,q_s\}$, $q_1<q_2<\cdots <q_s$.
\end{definition}

We denote the vector space of all sum-decomposable real arrays of size $n$ with 
parameters $d$ and $s$ by {\sc SAVS}($d,s,n$).

For $Q=\{j_1,j_2,\ldots,j_s\}\in\mathcal{Q}_s$ 
let $V_Q$ denote the vector space of all $d$-dimensional 
$n\times n \times \cdots  \times n$ 
arrays $C=(c(i_1,i_2,\ldots,i_d))$ for which
there exists a mapping  $f\colon \{1,2,\ldots , n\}^s \mapsto \mathbb{R}$ 
with $c(i_1,i_2,\ldots,i_d)=f(i_{j_1},i_{j_2},\ldots,i_{j_s})$. In other words,
the value $c(i_1,i_2,\ldots,i_d)$ depends only on the $s$ indices 
from the set $Q$ and not on all $d$ indices.
Let $Q_1, Q_2, \ldots, Q_{d \choose s}$ be such that $\mathcal{Q}_s=\{Q_1,\ldots,Q_{d \choose s}\}$. Note that
\begin{equation}\label{savs}
\text{{\sc SAVS}}(d,s,n)=V_{Q_1}+V_{Q_2}+\cdots +V_{Q_{d \choose s}}.
\end{equation}

We will use the following proposition, that can be found in \cite[Prop.~7.1 of
Chap.~1, p.~15]{PP05}, to prove that a variant of inclusion-exclusion 
principle holds for the vector subspaces $V_{Q_{i}}$. 
Recall that the distributivity with respect to sum and intersection does not hold for arbitrarily vector subspaces, i.e.\@ it is not true that $V_1 \cap \left(V_2 + V_3\right) = \left(V_{1} \cap V_{2}\right) + \left(V_{1} \cap
V_{3}\right)$ holds for all vector spaces $V_1, V_2, V_3$.

\begin{proposition}\label{vspaces}
Let $W$ be a vector space and $V_1, V_2, \ldots, V_n\subset W$ be a collection of its subspaces. Then the following conditions are equivalent:
\begin{itemize}
\item[(i)] The collection $V_1, V_2, \ldots, V_n$ is distributive with respect to the operations of sum and intersection.
\item[(ii)] There exists a basis $\{w_{\alpha}:\alpha\in A\}$ of the vector space $W$ such that each of the subspaces $V_i$ is the linear span of a set of vectors $w_{\alpha}$.
\end{itemize}
\end{proposition}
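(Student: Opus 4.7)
The implication (ii) $\Rightarrow$ (i) is the easy direction. If each $V_{i}=\mathrm{span}\{w_{\alpha}:\alpha\in A_{i}\}$ for some $A_{i}\subseteq A$, where $\{w_{\alpha}:\alpha\in A\}$ is a basis of $W$, then sums and intersections of the $V_{i}$'s correspond to unions and intersections of the indexing sets. The identity $V_{i}\cap V_{j}=\mathrm{span}\{w_{\alpha}:\alpha\in A_{i}\cap A_{j}\}$ is where linear independence is used: a vector expanded in the basis belongs to $V_{i}$ iff all its nonzero coordinates are indexed by $A_{i}$. Distributivity of $+$ and $\cap$ on the subspaces then follows immediately from the set-theoretic distributivity of $\cup$ over $\cap$.

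For (i) $\Rightarrow$ (ii) my plan is to build a basis compatible with the whole sublattice $\mathcal{L}$ of subspaces of $W$ generated by $V_{1},\ldots,V_{n}$ under $+$ and $\cap$. Since $\mathcal{L}$ is finite and, under hypothesis (i), distributive, its structure is controlled by its join-irreducible elements. For each join-irreducible $L\in\mathcal{L}$ there is a unique maximal proper lattice-element $L^{-}\subsetneq L$ (this is where distributivity is first used — in a non-distributive lattice $L^{-}$ need not be unique). Choose a vector-space complement $U_{L}$ of $L^{-}$ in $L$, so $L=L^{-}\oplus U_{L}$, and a basis $B_{L}$ of $U_{L}$.

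The goal is to verify three claims: (a) the union $B:=\bigcup_{L}B_{L}$ is linearly independent; (b) for every $M\in\mathcal{L}$, and in particular for $M=V_{i}$, one has $M=\mathrm{span}\bigl(\bigcup_{L\leq M}B_{L}\bigr)$; and (c) completing $B$ arbitrarily to a basis of $W$ yields the basis demanded by (ii). Claims (a) and (b) I would prove simultaneously by induction on $|\mathcal{L}|$, extracting a top element of $\mathcal{L}$ and descending to the sublattice generated by its predecessors. The key algebraic input is that in a finite distributive lattice every element is the join of the join-irreducibles below it, which feeds into the spanning part of (b).

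The main obstacle is claim (a). Given a relation $\sum_{L}v_{L}=0$ with $v_{L}\in U_{L}$, the natural strategy is to fix some $L$ that is maximal among those with $v_{L}\neq 0$ and to show $v_{L}\in L^{-}$, which contradicts $L=L^{-}\oplus U_{L}$. This works cleanly when the maximal $L$ with $v_{L}\neq 0$ is unique, but when several maximal join-irreducibles $L,L_{1},\ldots,L_{k}$ appear simultaneously one must argue that
\[
U_{L}\cap\Bigl(\sum_{j=1}^{k}U_{L_{j}}\Bigr)\subseteq L^{-}.
\]
This is the point where distributivity is indispensable: it lets us rewrite $L\cap(L_{1}+\cdots+L_{k})=(L\cap L_{1})+\cdots+(L\cap L_{k})$, and each $L\cap L_{j}$ is a proper sub-element of the join-irreducible $L$, hence contained in $L^{-}$. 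Without (i), this manipulation fails and the candidate basis collapses, which is precisely why (i) is not merely a sufficient but a necessary condition for (ii).
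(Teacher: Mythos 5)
The paper does not prove this proposition at all: it is quoted verbatim from the cited reference \cite[Prop.~7.1 of Chap.~1]{PP05}, so there is no in-paper argument to compare against. Your proof is essentially the standard lattice-theoretic one (and in spirit the one behind the cited source): the easy direction (ii) $\Rightarrow$ (i) via coordinates in the common basis, and (i) $\Rightarrow$ (ii) by passing to the sublattice $\mathcal{L}$ generated by $V_1,\ldots,V_n$, decomposing along join-irreducibles $L$ with their unique maximal proper predecessor $L^{-}$, choosing complements $U_L$ of $L^{-}$ in $L$, and using distributivity exactly where you say it is indispensable, namely to show that a maximal nonzero term in a vanishing sum would be forced into $L^{-}$. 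That argument is sound and goes through.

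Three small points should be tightened, though none is a fatal gap. First, the finiteness of $\mathcal{L}$ is not automatic for a sublattice of the subspace lattice generated by finitely many subspaces; it \emph{follows from} hypothesis (i), because the free distributive lattice on $n$ generators is finite, so you should invoke distributivity already there. Second, the uniqueness of $L^{-}$ is not where distributivity enters: in any finite lattice a join-irreducible element has a unique maximal proper element below it (take the join of all elements strictly below $L$; join-irreducibility makes it proper), so your parenthetical attribution is inaccurate, albeit harmless. Third, in the independence argument the sum you intersect with $L$ must range over \emph{all} other join-irreducibles carrying a nonzero component, not only the maximal ones; the same computation still works, since maximality of $L$ in the support gives $L\not\leq L'$ for every other $L'$ in the support, hence each $L\cap L'$ lies in $\mathcal{L}$, is strictly below $L$, and is therefore contained in $L^{-}$, and distributivity turns $L\cap\sum_{L'}L'$ into $\sum_{L'}(L\cap L')\subseteq L^{-}$. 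With these adjustments your plan yields a complete and correct proof, and it has the added value of making the paper self-contained where the paper itself only cites the literature.
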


Now we are ready to prove the following proposition.

\begin{proposition}\label{prop:SAVS}
Let {\text{\sc SAVS}}$(d,s,n)$ be expressed as in~\eqref{savs}. Then we have
\begin{itemize}
\item[(i)]$\dim(V_Q)=n^s$ for all $Q\in\mathcal{Q}_s$,

\item[(ii)] 
$\dim\left(   \cap_{i\in I} V_{Q_i} \right)  = n^{| \cap_{i\in I} Q_i |} 
\text{~for all~} I\subset \{1,\ldots, {d \choose s}\},$
%$\dim\left(\bigcap_{i=1}^k  V_{Q_i}\right)=n^{|\cap_{i=1}^k Q_i|},$
\item[(iii)] 
$\displaystyle \dim({\text{\sc SAVS}}(d,s,n))=\dim\left(\sum_{i=1}^{{d \choose
        s}}V_{Q_i}\right)=$

$\displaystyle \sum_{k = 1}^{{d \choose s}} (-1)^{k+1} 
\left( \sum_{1 \leq i_{1} < \cdots <
    i_{k} \leq {d\choose s}} \dim\left( V_{Q_{i_{1}}} \cap \cdots \cap
    V_{Q_{i_{k}}} \right) \right), 
$

\item[(iv)] 
$\dim({\text {\sc SAVS}}(d,d-1,n)=n^d-(n-1)^d,$

$\dim({\text{\sc SAVS}}(d,1,n)=dn-d+1$.
	\end{itemize}
\end{proposition}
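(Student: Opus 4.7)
My plan is to prove (i) and (ii) directly from the definition of $V_Q$, deduce (iii) from Proposition~\ref{vspaces} by exhibiting a single basis of the ambient space that is compatible with all of the $V_{Q_i}$ simultaneously, and finally specialize (iii) to obtain (iv).

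For (i), every array in $V_Q$ is uniquely determined by the function $f\colon\{1,\ldots,n\}^s\to\mathbb{R}$ appearing in its definition, so the map $f\mapsto C$ is a linear isomorphism onto $V_Q$ and $\dim(V_Q)=n^s$. For (ii), I would first establish the set-theoretic identity $\bigcap_{i\in I}V_{Q_i}=V_{\bigcap_{i\in I}Q_i}$, where $V_J$ is defined analogously for any $J\subseteq\{1,\ldots,d\}$: given $C\in\bigcap_{i\in I}V_{Q_i}$ and two index tuples that agree on $\bigcap_{i\in I}Q_i$, I interpolate between them by changing one coordinate at a time; each changed coordinate lies outside some $Q_i$, so $C$ is unchanged. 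The reverse inclusion is immediate. The argument of (i) then gives $\dim\bigl(V_{\bigcap_{i\in I}Q_i}\bigr)=n^{|\bigcap_{i\in I}Q_i|}$.

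The heart of the proof is (iii), and the main obstacle is verifying the distributivity hypothesis of Proposition~\ref{vspaces}: inclusion-exclusion on dimensions fails for arbitrary subspaces. To establish it, I would fix a decomposition $\mathbb{R}^n=U_0\oplus U_1$ with $U_0=\mathbb{R}\cdot(1,\ldots,1)$ the line of constant vectors and $U_1$ any linear complement, pick bases of $U_0$ and $U_1$, and regard the space $W$ of all $d$-dimensional arrays as $(\mathbb{R}^n)^{\otimes d}=\bigotimes_{j=1}^d(U_0\oplus U_1)$. Distributing this tensor product yields a basis of $W$ whose elementary vectors are indexed by a subset $S\subseteq\{1,\ldots,d\}$ (recording which factors come from $U_1$) together with a choice of basis vector in each factor. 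Such a basis vector, viewed as the array $\prod_{j=1}^d v_j(i_j)$, is constant in the coordinates $i_j$ with $j\notin S$; hence it lies in $V_Q$ iff $S\subseteq Q$. So each $V_{Q_i}$ is the linear span of a subset of this common basis, and Proposition~\ref{vspaces} gives distributivity. The standard counting of unions of basis subsets then yields the inclusion-exclusion formula in (iii).

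For (iv), I would specialize (iii) using (ii). In the planar case $s=d-1$, each $Q\in\mathcal{Q}_s$ omits exactly one index and any $k$-fold intersection has size $d-k$, so
\[
\dim(\text{\sc SAVS}(d,d-1,n))=\sum_{k=1}^{d}(-1)^{k+1}\binom{d}{k}n^{d-k}=n^d-(n-1)^d
\]
by the binomial expansion $(n-1)^d=\sum_{k=0}^d(-1)^k\binom{d}{k}n^{d-k}$. In the axial case $s=1$, the $Q_i$ are pairwise disjoint singletons, so every $k$-fold intersection for $k\ge 2$ contributes dimension $n^0=1$, and
\[
\dim(\text{\sc SAVS}(d,1,n))=dn+\sum_{k=2}^d(-1)^{k+1}\binom{d}{k}=dn-d+1,
\]
using $\sum_{k=0}^d(-1)^k\binom{d}{k}=0$.
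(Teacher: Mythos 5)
Your proposal is correct, and parts (i), (ii) and (iv) match the paper in substance, but your handling of (iii) takes a genuinely different route. The paper proves (ii) for $|I|=2$ and extends by induction, and then gets (iii) by induction on the number of subspaces from the two-subspace formula $\dim(V_1+V_2)=\dim V_1+\dim V_2-\dim(V_1\cap V_2)$, reducing everything to the distributivity identity for the $V_{Q_i}$, which it justifies via Proposition~\ref{vspaces} using the elementary arrays as a basis of the ambient space together with the arrays $A_{k_1,\ldots,k_s}$ as bases of the individual $V_Q$. You instead build one common basis up front: writing $\mathbb{R}^n=U_0\oplus U_1$ with $U_0$ the constants, the pure tensors of chosen basis vectors form a basis of the space of all arrays in which a basis vector lies in $V_Q$ exactly when its set $S$ of non-constant factors satisfies $S\subseteq Q$; hence each $V_{Q_i}$ is literally the span of a subset of one fixed basis, which is precisely hypothesis (ii) of Proposition~\ref{vspaces}, and the inclusion--exclusion formula becomes a count of unions and intersections of index sets, with no induction. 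Your interpolation proof that $\bigcap_{i\in I}V_{Q_i}=V_{\bigcap_{i\in I}Q_i}$ is likewise more explicit than the paper's two-set argument plus induction. What your route buys is a transparent verification of the common-basis condition (note that the elementary arrays themselves do not span any $V_Q$ when $s<d$, so your tensor basis is the object that actually certifies distributivity); what the paper's route buys is that it stays with the elementary two-subspace dimension formula and avoids tensor-product bookkeeping. Two small steps you should make explicit: the basis vectors with $S\subseteq Q$ not only lie in $V_Q$ but span it (immediate since the chosen bases of $U_0$ and $U_1$ together span $\mathbb{R}^n$, or by comparison with $\dim V_Q=n^s$), and for subsets of a common basis the intersection of their spans equals the span of their intersection, which is what turns (ii) and (iii) into pure counting.
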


\begin{proof} 

Ad (i): Follows directly from the definition of $V_Q$.

Ad (ii): Let us start with $|I|=2$ and consider $J=\{j_1,j_2,\ldots,j_s\}$, 
$K=\{k_1,k_2,\ldots,k_s\}$ from  $\mathcal{Q}_s$. Further, let 
$C$ and $E$ be two  arrays from $V_J$ and $V_K$, respectively. Hence,
$c(i_1,i_2,\ldots,i_d)=f(i_{j_1},i_{j_2},\ldots,i_{j_s})$ and
$e(i_1,i_2,\ldots,i_d)=g(i_{k_1},i_{k_2}, \ldots, i_{k_s})$ for some $f,g
\colon \{1,2,\ldots , n\}^s \mapsto \mathbb{R}$. Then for every
$A=(a(i_1,\ldots,i_d))\in V_J\cap V_K$ we have that
$a(i_1,i_2,\ldots,i_d)=t(i_{q_1},\ldots,i_{q_{|J\cap K|}})$, $q_i\in J\cap K$,
for some $t\colon \{1,2,\ldots, n\}^{|J\cap K |} \mapsto \mathbb{R}$. Hence,
$\dim(V_J\cap V_K)=n^{|J\cap K |}$. The case $|I|\ge 3$ follows by an inductive 
argument. This settles (ii).

Ad (iii): The case of two vector spaces involved in the sum 
follows from the fact that
\begin{equation}\label{eq:case2}
\dim (V_1+V_2)=\dim(V_1)+\dim(V_2)-\dim(V_1\cap V_2)
\end{equation}
holds for any two subspaces $V_1$ and $V_2$ of a vector space. Next we settle 
the case of three vector spaces, so let $Q_i, Q_j, Q_\ell\in \mathcal{Q}_s$.
The general case then follows by induction.
Note that (\ref{eq:case2}) implies that
\begin{align} \label{eq:case3}
\dim \left(V_{Q_i}+V_{Q_j}+V_{Q_\ell}\right) &= \nonumber\\
\dim \left(V_{Q_i}\right)+\dim&\left(V_{Q_{j}}+V_{Q_\ell}\right)-\dim\left(V_{Q_i}\cap \left(V_{Q_j}+V_{Q_\ell}\right)\right). 
\end{align}
To proceed further it suffices to prove that the subspaces $V_{Q_i}$ are 
distributive with respect to sum and intersection, i.e.\@ that
\begin{equation}\label{eq:distr}
V_{Q_i} \cap \left(V_{Q_j} + V_{Q_\ell}\right) = \left(V_{Q_i} \cap V_{Q_j}\right) + \left(V_{Q_i} \cap
V_{Q_\ell}\right)
\end{equation}
holds. It is easy to check that using (\ref{eq:distr}) and (\ref{eq:case2}) in 
(\ref{eq:case3}) above leads to the claim (iii) for three vector spaces. So, it only remains to show that the distributivity property \eqref{eq:distr} holds. To that end, we construct bases for the vector spaces $V_Q$ for $Q\in
{\mathcal Q_s}$ and for the vector space $V$ of all $d$-dimensional 
$n\times \cdots \times n$ real arrays. The distributivity then follows 
from Proposition~\ref{vspaces}. Namely, call an array from $V$ {\em elementary} if a single entry is 1 and all other
entries are 0. It is easy to see that the set of $n^d$ 
$d$-dimensional $n\times n \times \cdots  \times n$ elementary arrays
forms a basis for $V$. Next we construct a basis for the subspace $V_Q$ where
$Q=\{j_1,j_2,\ldots,j_s\}$. Let $A_{k_1,\ldots, k_s}$ be the 0-1 
$d$-dimensional array such that the entry at position $(i_1, \ldots, i_d)$ is 1
if $i_{j_1}=k_1$, $\ldots$, $i_{j_s}=k_s$ and 0 otherwise. 
Then the set of arrays $\{A_{k_1,\ldots, k_s} \colon (k_1,\ldots, k_s)\in
\{1,\ldots,n\}^s\}$ forms a basis for $V_Q$. 
Note that every element of the basis for $V_Q$ can be written as a linear
combination of elementary arrays.

Ad (iv): Note that any intersection of $\ell$ distinct subsets of 
$\{1,2,\ldots,d\}$ has cardinality $d-\ell$. Hence, from (ii) and (iii) 
it follows that
\[\dim({\text{\sc SAVS}}(d,d-1,n))=\sum_{i=1}^d(-1)^{i+1}{d \choose i}n^{d-i},\]
which is equal to $n^d-(n-1)^d$ by the binomial theorem. Since the intersection of any distinct one-element sets is empty it follows that
$\dim({\text{\sc SAVS}}(d,1,n))=dn-d+1$. 
\end{proof}

\subsection{The COVP for the axial case: ($d,1$)-AP}

Now we turn to the problem of characterizing the instances of the axial
$d$-dimensional assignment problem with the constant objective value property (COVP).

\begin{theorem}\label{thm:axial}
An instance of the $(d,1)$-AP with cost array $C$ has the COVP if and only if $C$ is a 
sum-decomposable array with parameters $d$ and $1$.
\end{theorem}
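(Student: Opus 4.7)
The plan is to prove the two directions of the equivalence separately. The ``if'' direction is a short verification. If $c(i_1,\ldots,i_d) = \sum_{k=1}^d a_k(i_k)$, I would use Observation~\ref{obs1} to write any feasible solution as $F = \{(i,\phi_1(i),\ldots,\phi_{d-1}(i)):i=1,\ldots,n\}$ for permutations $\phi_1,\ldots,\phi_{d-1}$ of $\{1,\ldots,n\}$, so that
\[
c(F) \;=\; \sum_{i=1}^n a_1(i) + \sum_{k=2}^d \sum_{i=1}^n a_k(\phi_{k-1}(i)) \;=\; \sum_{k=1}^d \sum_{j=1}^n a_k(j),
\]
which does not depend on the choice of $F$.

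For the nontrivial direction, the central tool will be a swap argument. Given any two $d$-tuples $T,T'$ that disagree in every coordinate and any $k\in\{1,\ldots,d\}$, I first observe that $\{T,T'\}$ can be extended to a feasible solution of the $(d,1)$-AP by completing the relevant permutations arbitrarily on the remaining $n-2$ positions of each coordinate, and that swapping the $k$-th coordinate entries of $T$ and $T'$ while leaving the remaining $n-2$ tuples unchanged produces a second feasible solution. The COVP assumption then yields
\[
c(T)+c(T') \;=\; c(T|_{k\to T'_k})+c(T'|_{k\to T_k}).
\]
Rearranging shows that the ``coordinate-$k$ difference'' $\delta_k(u,v;T) := c(T|_{k\to u})-c(T|_{k\to v})$ coincides for any two contexts $T,T'$ that disagree in every coordinate other than $k$.

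Next comes a chaining step to eliminate the $T$-dependence. For any two contexts $T,T''$ and any fixed $k$, provided $n\ge 3$, I can select a third context $T^\ast$ that disagrees with both $T$ and $T''$ in every coordinate other than $k$ (each such coordinate need only avoid two prescribed values out of $n\ge 3$ available ones). Two applications of the swap identity then give $\delta_k(u,v;T) = \delta_k(u,v;T^\ast) = \delta_k(u,v;T'')$, so $\delta_k(u,v)$ is a well-defined function of $k,u,v$ alone. Setting $a_1(v) := c(v,1,\ldots,1)$ and $a_k(v) := \delta_k(v,1)$ for $k\ge 2$, a telescoping identity
\[
c(i_1,\ldots,i_d) \;=\; c(1,\ldots,1) + \sum_{k=1}^d \delta_k(i_k,1) \;=\; \sum_{k=1}^d a_k(i_k)
\]
exhibits $C$ as sum-decomposable with parameters $d$ and $1$.

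I expect the main obstacle to be the degenerate parameter regime. The case $d=2$ reduces to the classical linear assignment problem and is already covered by the remark following Theorem~\ref{thm:tsp}. The more delicate case is $n=2$, where the chaining step breaks down because each non-$k$ coordinate can no longer avoid two prescribed values. In this regime I would instead verify the claim by dimension counting, using $\dim(\text{\sc SAVS}(d,1,2)) = d+1$ from Proposition~\ref{prop:SAVS}(iv) together with a direct count of independent COVP constraints produced by pairs of feasible solutions, or alternatively embed the $n=2$ instance into a larger one where the chaining argument applies.
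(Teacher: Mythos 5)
Your ``if'' direction and your swap--and--chain argument for the converse are correct for $n\ge 3$, and in essence this is the paper's own proof: the paper likewise compares two feasible solutions that share a common completion by $n-2$ tuples and differ by exchanging coordinate entries between two tuples disagreeing in every coordinate (its $d$ displayed equations are exactly your swap identity with base point $(1,\ldots,1)$), and then splits off one coordinate at a time. Your chaining through a third context is in fact a more careful way to obtain the base-point identities also for tuples some of whose coordinates already equal $1$, a point the paper passes over quickly, and your telescoping then yields the decomposition cleanly; the case $d=2$ needs no separate treatment in your scheme.

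The genuine problem is your fallback plan for $n=2$: there is nothing to verify there, because for $n=2$ and $d\ge 3$ the asserted equivalence is false. With $n=2$ the feasible solutions are exactly the $2^{d-1}$ pairs $\{x,\bar x\}$ consisting of a tuple $x$ with $x_1=1$ and its coordinatewise complement $\bar x$, so the COVP imposes only $2^{d-1}-1$ independent linear conditions on the $2^d$ entries; the COVP space thus has dimension $2^{d-1}+1$, which exceeds the dimension $d+1$ of \textsc{SAVS}$(d,1,2)$ from Proposition~\ref{prop:SAVS}(iv) whenever $d\ge 3$. Concretely, the array with $c(1,\ldots,1)=1$, $c(2,\ldots,2)=-1$ and all other entries $0$ gives every feasible solution the value $0$, yet $c(1,1,1,\ldots,1)+c(2,2,1,\ldots,1)=1\neq 0=c(1,2,1,\ldots,1)+c(2,1,1,\ldots,1)$, so it is not sum-decomposable. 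Hence your proposed dimension count would refute rather than confirm the claim, and embedding the $n=2$ instance into a larger one cannot help, since the COVP of the small array implies nothing about an enlarged array. In fairness, the theorem is stated without excluding $n=2$ and the paper's own proof also breaks down there (for $n=2$ equation~\eqref{axial:first} no longer forces the splitting $c=v_1(i_1)+g_1(i_2,\ldots,i_d)$); for $d=2$, $n=2$ the claim does hold. So the statement should be read with $n\ge 3$, and your main argument proves exactly that; just drop the $n=2$ contingency or replace it by the counterexample above.
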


\begin{proof}Note that one direction follows immediately, i.e.\@ if $C$ is the sum of $d$ vectors, then every feasible solution has the same objective value. Conversely, assume that every feasible solution has the same objective value. For integers $i_1,i_2,\ldots,i_d\in\{2,3,\ldots,n\}$ consider the following $d$ pairs of $d$-tuples:
\begin{center}
\begin{tabular}{l}
$(1,1,\ldots ,1)$,\ \ \ $(i_1,i_2,\ldots,i_d)$\\
$(i_1,1,\ldots,1)$,\ \ \ $(1,i_2,\ldots,i_d)$\\
$(1,i_2,1,\ldots,1)$,\ \ \ $(i_1,1,i_3,\ldots,i_d)$\\
	\qquad$\vdots$\\
$(1,\ldots,1,i_d)$,\ \ \ $(i_1,\ldots,i_{d-1},1).$
\end{tabular}\end{center}
There exists a set of $n-2$ $d$-tuples which completes each of these pairs to a feasible solution; for example the set $\{(k_1^j,k_2^j,\ldots,k_d^j)\colon j=2,\ldots,n-1,\  k_l^j=j \text{ if } j<i_l \text{ and }k_l^j=j+1 \text{ otherwise},\  l=1,\ldots d\}$. By assumption we have
\begin{align}
c(i_1,i_2,\ldots,i_d)&=c(i_1,1,\ldots,1)+c(1,i_2,\ldots,i_d)-c(1,\ldots ,1) \label{axial:first}\\
	&=c(1,i_2,1,\ldots,1)+c(i_1,1,i_3,\ldots,i_d)-c(1,\ldots ,1) \label{axial:second}\\
	&\hspace{6pt}\vdots\notag\\
	&=c(1,\ldots,1,i_d)+c(i_1,\ldots,i_{d-1},1)-c(1,\ldots ,1).\notag
\end{align}
Due to \eqref{axial:first} there exist a vector $V_1=(v_1(i))$
and a $(d-1)$-dimensional array $G_1=(g_1(i_1,\ldots,i_{d-1}))$ such that
$c(i_1,i_2,\ldots,i_d)=v_1(i_1)+g_1(i_2,\ldots,i_d)$. 
Analogously, from~\eqref{axial:second} it follows that there 
exists a vector $V_2$ and a $(d-1)$-dimensional array $G_2$ such that 
$c(i_1,i_2,\ldots,i_d)=v_2(i_2)+g_2(i_1,i_3,\ldots,i_d)$. Hence, $c(i_1,i_2,\ldots,i_d)=v_1(i_1)+v_2(i_2)+g_{1,2}(i_3,\ldots,i_d)$ 
for some $(d-2)$-dimensional array $G_{1,2}=(g_{1,2}(i_1,$ $\ldots,i_{d-2}))$. 
Using the remaining equations in an analogous manner we finally obtain
that $C$ is the sum of $d$ vectors, i.e.
\[c(i_1,i_2,\ldots,i_d)=v_1(i_1)+v_2(i_2)+\cdots+v_d(i_d),\]
where the vectors $V_k=(v_k(i))$ can be chosen as follows:
\begin{align*}
	v_1(i)&=c(i,1,\ldots,1)-\frac{d-1}{d}c(1,1,\ldots,1),\\
	&\hspace{6pt} \vdots\\
	v_d(i)&=c(1,\ldots,1,i)-\frac{d-1}{d}c(1,1,\ldots,1).
\end{align*}
\end{proof}

\subsection{The COVP for the planar case: ${(d,d-1)}$-AP}

We now turn to the planar case. Note that there are exactly two feasible 
solutions of the $(d,d-1)$-AP when $n=2$.

\begin{definition}
We say that an instance of the $(d,d-1)$-AP with cost array $C$ 
has property $P_2$ if for every $2\times2\times\cdots \times2$ sub-array of 
$C$, which is obtained by restricting the index sets to $\{1,i_1\}\times
\{1,i_2\}\times\cdots\times \{1,i_d\}$,
the two feasible solutions on the resulting subproblem of size 2
have the same objective value.
\end{definition}

Property $P_2$  and sum-decomposable cost arrays for the $(d,d-1)$-AP are
related in the following way.

\begin{lemma}\label{lem:planar}
Let $I$ be an instance of the  $(d,d-1)$-AP with cost array $C$. If $I$ has property $P_2$, then $C$ is a sum-decomposable array with
parameters $d$ and $d-1$.
\end{lemma}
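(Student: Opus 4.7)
The plan is to reformulate property $P_2$ as the vanishing of a discrete mixed-difference operator applied to $c$, and then to prove the conclusion by induction on the dimension $d$.

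For each $k\in\{1,\ldots,d\}$ I introduce the difference operator
\[(\delta_k c)(i_1,\ldots,i_d):=c(i_1,\ldots,i_d)-c(i_1,\ldots,i_{k-1},1,i_{k+1},\ldots,i_d).\]
These operators commute pairwise, $(\delta_k c)(i_1,\ldots,i_d)$ vanishes whenever $i_k=1$, and $\delta_k c\equiv 0$ holds if and only if $c$ does not depend on its $k$-th index. The key observation is that for $n=2$ the two feasible solutions of the $(d,d-1)$-AP partition $\{1,2\}^d$ according to the parity of the number of $2$'s, so a direct expansion shows that $(\delta_1\cdots\delta_d c)(i_1,\ldots,i_d)$ equals, up to a global sign, the difference between the two objective values on the sub-array $\{1,i_1\}\times\cdots\times\{1,i_d\}$. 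Hence property $P_2$ is equivalent to $\delta_1\cdots\delta_d c\equiv 0$, and the task reduces to showing that this vanishing forces a decomposition $c=\sum_{k=1}^d a_k$ in which $a_k$ is independent of $i_k$.

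I would prove this reformulated statement by induction on $d$. In the base case $d=2$, the equation $\delta_1\delta_2 c(i,j)=0$ rearranges to $c(i,j)=c(i,1)+c(1,j)-c(1,1)$, so $a_1(j):=c(1,j)$ and $a_2(i):=c(i,1)-c(1,1)$ yield the decomposition. For the inductive step, for each value of $i_d$ I consider the $(d-1)$-dimensional array
\[c'_{i_d}(i_1,\ldots,i_{d-1}):=c(i_1,\ldots,i_{d-1},i_d)-c(i_1,\ldots,i_{d-1},1),\]
and note that $\delta_1\cdots\delta_{d-1}c'_{i_d}(i_1,\ldots,i_{d-1})=(\delta_1\cdots\delta_d c)(i_1,\ldots,i_{d-1},i_d)=0$. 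The induction hypothesis yields $c'_{i_d}=\sum_{k=1}^{d-1}a_k^{i_d}$ with each $a_k^{i_d}$ independent of $i_k$. Setting $a_d(i_1,\ldots,i_{d-1}):=c(i_1,\ldots,i_{d-1},1)$ (which is automatically independent of $i_d$) and, for $k<d$, $a_k(i_1,\ldots,i_{k-1},i_{k+1},\ldots,i_d):=a_k^{i_d}(i_1,\ldots,i_{k-1},i_{k+1},\ldots,i_{d-1})$, one obtains $c=\sum_{k=1}^d a_k$ with each $a_k$ independent of $i_k$, which is exactly sum-decomposability with parameters $d$ and $d-1$.

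The main obstacle I anticipate is the initial reformulation step: one must carefully identify the $P_2$ condition (the two feasible solutions on a $2\times\cdots\times 2$ sub-array having equal objective values) with the signed alternating sum $\delta_1\cdots\delta_d c$, keeping track of the parity description of the two $(d,d-1)$-AP solutions of size $2$ and the signs coming from inclusion-exclusion. Once this algebraic identification is in place, the induction runs smoothly; the only subtle point worth noting is that the arrays $a_k^{i_d}$ produced by the inductive step are allowed to depend on the parameter $i_d$, which is harmless because $i_d$ is among the indices that $a_k$ is permitted to depend on for each $k<d$.
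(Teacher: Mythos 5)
Your proof is correct, and it differs from the paper's in how the decomposition is extracted. The first step is common to both: the two feasible solutions of the size-$2$ subproblem are the even- and odd-parity halves of $\{1,i_1\}\times\cdots\times\{1,i_d\}$, so property $P_2$ amounts to the vanishing of the mixed difference $\delta_1\cdots\delta_d\, c$, which is exactly the paper's identity \eqref{prva} written in operator form (and your observation that $\delta_k c$ vanishes when $i_k=1$ correctly disposes of the degenerate tuples, where no genuine size-$2$ subproblem exists). From there the paper proceeds non-recursively: it splits the right-hand side of \eqref{prva} in one shot into $d$ arrays $A_1,\ldots,A_d$, giving each term $c(x)$ with exactly $i$ ones the weight $(-1)^{i+1}/i$ in each of the $i$ arrays indexed by coordinates where $x$ equals $1$, so that summing the $A_j$ recovers \eqref{prva}; each $A_j$ visibly depends only on the indices other than $i_j$. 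You instead peel off one coordinate at a time and induct on $d$ down to the $2$-dimensional sum-matrix case. Your route avoids the combinatorial bookkeeping behind the fractional weights (checking that every term is counted the right number of times), at the price of an induction; the paper's construction yields a closed-form, coordinate-symmetric decomposition, whereas unwinding your recursion gives a valid but asymmetric one anchored at index $1$. The subtlety you flag, that the arrays $a_k^{i_d}$ depend on $i_d$, is indeed the only point to watch, and you resolve it correctly: for $k<d$ the index $i_d$ is among those that $a_k$ is permitted to use.
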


\begin{proof}
Consider the $2\times2\times\cdots \times2$ subarray $D_2$ of $C$ obtained by 
restricting index sets to $\{1,i_1\}\times \{1,i_2\}\times\cdots \times\{1,i_d\}$ with $i_j\in
\{2,\ldots,n\}$ for $j=1,\ldots,d$. By exploiting the fact that the two feasible solutions for the subarray $D_2$ have 
the same objective value  we get that
\begin{equation}\label{prva}
	c(i_1,i_2,\ldots.i_d)=\sum_{x\in I_1}c(x)-\sum_{x\in I_2}c(x)+\cdots +(-1)^{d+1}\sum_{x\in I_d}c(x),
\end{equation}
where $I_i$ is the set of all $d$-tuples from $\{1,i_1\}\times \{1,i_2\}\times\cdots \times\{1,i_d\}$ with exactly $i$ ones.
Then from~\eqref{prva} it follows that $C$ can be expressed as the sum of $d$ $(d-1)$-dimensional arrays $A_j=(a_j(i_1,\ldots,i_{d-1}))$, $j=1,\ldots, d$, defined by
\begin{align*}
a_1(i_2, i_3,\ldots,i_d)=&\sum_{x\in I^1_1}c(x)-\frac{1}{2}\sum_{x\in I^1_2}c(x)+\cdots +(-1)^{d+1}\frac{1}{d}\sum_{x\in I^1_d}c(x) \\
a_2(i_1, i_3,\ldots,i_d)=&\sum_{x\in I^2_1}c(x)-\frac{1}{2}\sum_{x\in I^2_2}c(x)+\cdots +(-1)^{d+1}\frac{1}{d}\sum_{x\in I^2_d}c(x)\\
\vdots\ &\\
a_d(i_1, i_2,\ldots,i_{d-1})=&\sum_{x\in I^d_1}c(x)-\frac{1}{2}\sum_{x\in I^d_2}c(x)+\cdots +(-1)^{d+1}\frac{1}{d}\sum_{x\in I^d_d}c(x),
\end{align*}
where $I^k_i$ is the set of all $d$-tuples from $\{1,i_1\}\times \{1,i_2\}\times\cdots \times\{1,i_d\}$ with exactly $i$ ones, one of which is on the $k$-th coordinate.
\end{proof}

The following result relates property $P_2$ and the COVP. 

\begin{proposition}\label{prop:planar}
Every instance of the $(d,d-1)$-AP with cost array $C$ with $n\neq 3$ that has 
the COVP, also has property $P_2$.
\end{proposition}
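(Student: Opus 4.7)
Fix $i_1, \ldots, i_d \in \{2, \ldots, n\}$, and let $S = \{1, i_1\} \times \cdots \times \{1, i_d\}$ denote the corresponding $2 \times 2 \times \cdots \times 2$ subarray. The $(d,d-1)$-AP restricted to $S$ has exactly two feasible solutions, which I will call $L_+$ and $L_-$. My plan is to exhibit two global feasible solutions $F_+$ and $F_-$ of the $(d,d-1)$-AP that agree outside $S$ and satisfy $F_\pm \cap S = L_\pm$. The COVP hypothesis then forces $c(F_+) = c(F_-)$, and cancelling the common contribution on $\{1, \ldots, n\}^d \setminus S$ yields $c(L_+) = c(L_-)$, which is exactly what property $P_2$ asserts for this subarray.

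The whole problem reduces to one existence statement via a swap principle. If $F$ is any feasible solution of the $(d,d-1)$-AP with $F \cap S = L_+$, then $F' := (F \setminus L_+) \cup L_-$ is again feasible and agrees with $F$ outside $S$. To verify this, consider any constraint line $\ell$ of length $n$. If $\ell \cap S = \emptyset$, then $F$ and $F'$ coincide on $\ell$. If $\ell$ meets $S$, then $|\ell \cap S| = 2$; local feasibility of $L_+$ gives $|L_+ \cap \ell| = 1$, and combined with $|F \cap \ell| = 1$ and $F \cap S = L_+$ this forces $F \cap \ell \subset S$. Replacing the unique cell of $F \cap \ell$ (which lies in $L_+$) by the unique cell of $L_- \cap \ell$ preserves $|F' \cap \ell| = 1$. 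It therefore suffices to construct, for every $n \neq 3$ and every such subarray $S$, one feasible solution $F$ of the $(d,d-1)$-AP with $F \cap S = L_+$.

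For $n = 2$ the subarray $S$ fills the whole ground set $\{1, 2\}^d$ and $F = L_+$ is itself feasible. For $n \geq 4$, after permuting rows, columns and symbols one may assume $i_1 = \cdots = i_d = 2$, so that $S = \{1, 2\}^d$ and $L_+$ is the parity-based ``hyperintercalate'' on $\{1, 2\}^{d-1}$ with entries in $\{1, 2\}$. When $d = 3$ this is the classical task of embedding a $2 \times 2$ intercalate into an $n \times n$ Latin square, which is solvable for every $n \neq 3$ (e.g.~via the Cayley table of $\mathbb{Z}_n$ for even $n$, or by direct completions for odd $n \geq 5$; the case $n = 3$ is genuinely excluded since Latin squares of order $3$ admit no intercalate). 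For $d \geq 4$ I would use Observation~\ref{obs2} to decompose the sought $F$ into its $n$ layers indexed by the last coordinate: layers $1$ and $2$ receive the two halves of $L_+$, which are themselves hyperintercalates of dimension $d - 1$ and extend to full feasible solutions of the $(d-1, d-2)$-AP by induction on $d$, while the remaining $n - 2$ layers are realised as pairwise disjoint feasible solutions of the $(d-1, d-2)$-AP on the complementary cells of $\{1, \ldots, n\}^{d-1}$, for instance via shifted copies of a cyclic-group Latin hypercube.

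The main obstacle in this outline is the inductive step for $d \geq 4$: after the forced assignments in layers $1$ and $2$ one must check that the remaining $n - 2$ layers can be filled consistently, that is, as pairwise disjoint feasible solutions of the $(d-1, d-2)$-AP avoiding the already-used cells. The exclusion $n \neq 3$ enters precisely at the base case $d = 3$ and is inherited through the induction; for $n \geq 4$ enough room always remains, either transparently through an abelian group of order $n$ containing an involution (whenever $n$ is even) or via an ad-hoc completion exploiting the $n - 2 \geq 2$ unconstrained layers when $n$ is odd and $n \geq 5$.
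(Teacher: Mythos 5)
Your first half is correct and is a genuinely different (and cleaner) reduction than the one in the paper. The swap lemma is sound: for a line $\ell$ meeting $S$ one indeed has $|\ell\cap S|=2$, the two cells of $\ell\cap S$ lie one in $L_+$ and one in $L_-$, and $F\cap S=L_+$ forces $F\cap\ell\subset S$, so exchanging $L_+$ for $L_-$ preserves feasibility. This reduces Proposition~\ref{prop:planar} to producing a \emph{single} feasible solution $F$ with $F\cap S=L_+$, and it automatically confines the symmetric difference of $F$ and its swap to the subcube $S$ -- a bookkeeping point that the paper instead tries to arrange by constructing two solutions simultaneously (an inductively built $F^{d-1}$ inflated by two families of mutually disjoint permutations, i.e.\ two Latin squares that differ only in a corner intercalate, citing Ryser~\cite{R51}). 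Your swap principle buys you the second solution for free from the first, which is a real simplification of that part of the argument.

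The genuine gap is in your existence statement for $d\ge 4$, and you concede it yourself. Two specific problems: (i) the two layers obtained by applying the induction hypothesis separately to the two halves of $L_+$ are two unrelated feasible solutions of the $(d-1,d-2)$-AP, and nothing guarantees they are disjoint outside $\{1,2\}^{d-1}$, which Observation~\ref{obs2} requires; (ii) even granting disjointness, filling the remaining $n-2$ layers means partitioning the complement of two \emph{arbitrary} prescribed layers into $n-2$ pairwise disjoint $(d-1,d-2)$-AP solutions -- a Latin-cube-completion type claim that you do not prove, and ``shifted copies of a cyclic-group Latin hypercube'' will not in general avoid two layers you did not construct as shifts of anything. (The $d=3$, odd $n\ge 5$ base case is also only asserted; it is standard, e.g.\ complete a $2\times n$ Latin rectangle whose first two columns form the intercalate via Ryser~\cite{R51}.) The gap is easy to close if you abandon the ``prescribe two layers, then complete'' plan and instead build all $n$ layers at once, exactly as the paper does for its construction: take, by induction on $d$, one feasible solution $G$ of the $(d-1,d-2)$-AP containing the even-parity local solution on $\{1,2\}^{d-1}$, take a Latin square $T$ of order $n$ whose upper-left $2\times 2$ square is an intercalate on symbols $\{1,2\}$ with $T(1,1)=1$ (this exists for every $n\neq 3$), and set $F=\{(i,a_1,\ldots,a_{d-2},T(i,a_{d-1}))\colon (a_1,\ldots,a_{d-1})\in G,\ i=1,\ldots,n\}$. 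Each layer is $G$ with its last coordinate permuted, the layers are pairwise disjoint because the rows of $T$ are disjoint permutations, so $F$ is feasible by Observation~\ref{obs2}, and $F$ contains the even-parity local solution on $\{1,2\}^d$. Your swap lemma then finishes the proof; with this replacement your argument is complete and, in my view, tidier than the published one.
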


\begin{proof} 
We will prove that both feasible solutions of the $(d,d-1)$-AP on the sub-array of $C$ with 
indices $\{1,2\}\times \{1,2\}\times\cdots\times\{1,2\}$ have the 
same objective value; the general case can be shown analogously.  When $n=2$, this is trivially true. Assume $n\ge 4$.  We will build two different feasible solutions $F^d_1$ and $F^d_2$ for the $(d,d-1)$-AP that satisfy the following
property: $F^d_1$ and $F^d_2$ both contain a feasible solution of the 
$(d,d-1)$-AP on the subproblem induced by the index set $\{1,2\}^d$, and 
all other elements of these two solutions are the same.
The existence of such $F^d_1$ and $F^d_2$ completes the proof. Namely, by assumption the objective values of 
$F_1^d$ and $F_2^d$ are equal, hence \eqref{prva} holds.

Next we explain how 
$F^d_1$ and $F^d_2$ can be constructed recursively from a feasible solution 
of the $(d-1,d-2)$-AP, which we denote by $F^{d-1}$, which also contains a feasible solution on the subproblem of size 2 induced by the index set $\{1,2\}^{d-1}$. We define $F^d_j$, $j=1,2$ as follows:
\[F_j^d=\{(i,a_1,a_2,\ldots,a_{d-2},\phi^j_i(a_{d-1}))\colon(a_1,\ldots,a_{d-1})
\in F^{d-1},i=1,\ldots,n\},\]
where $n$ permutations 
$\phi^j_i$, $i=1,\ldots,n$, are chosen to be mutually disjoint (recall that two permutations $\alpha$ and $\beta$ are disjoint if 
$\alpha(i)\neq\beta(i)$ for all $i$). Furthermore, for every $i$ we choose $\phi^1_i$ and $\phi^2_i$ such that they coincide except for 
$\phi^1_1(1)=1$, $\phi^1_1(2)=2$, $\phi^1_2(1)=2$, $\phi^1_2(2)=1$, in contrast 
to $\phi^2_1(1)=2$, $\phi^2_1(2)=1$, $\phi^2_2(1)=1$, $\phi^2_2(2)=2$.
To show that such two sets of permutations (for $j=1$ and $j=2$) exist, we represent them as 
two $n\times n$ Latin squares. For $j=1,2$, let the $j$-th table contain the 
integer $\phi^j_r(s)$ in the row $r$ and column $s$. The resulting tables 
will be two Latin squares of order $n$ which are identical except in the 
$2\times2$ upper-left corner. That corner is filled with two different 
Latin squares of order 2, respectively. It is well known that for 
$n\geq 4$ such Latin squares exist, see~\cite{R51}. From Observation~\ref{obs2} we get that $F_j^d$ are 
indeed feasible solutions.
\end{proof}

The approach we followed in the proof of Proposition~\ref{prop:planar} did not
serve us to cover the case $n=3$ and $d\geq 5$. Using a
linear algebra approach we were able to cover this case as well and hence 
to prove the following COVP characterization 
for the $(d,d-1)$-AP.

\begin{theorem}\label{thm:planar}
An instance of the $(d,d-1)$-AP with cost array $C$ has 
the COVP if and only if  $C$ is 
a sum-decomposable array with parameters $d$ and $d-1$.
\end{theorem}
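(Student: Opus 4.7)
The ``if'' direction is immediate: if $C = \sum_{Q \in \mathcal{Q}_{d-1}} A^Q$ is sum-decomposable with parameters $d$ and $d-1$, then for any feasible solution $F$ of the $(d,d-1)$-AP and any $Q \in \mathcal{Q}_{d-1}$, the defining constraints guarantee that each of the $n^{d-1}$ prescriptions of the indices in $Q$ is hit by exactly one element of $F$. Hence $\sum_{x \in F} a^Q(h_Q(x)) = \sum_{(j_1,\ldots,j_{d-1})} a^Q(j_1,\ldots,j_{d-1})$ depends only on $A^Q$, and summing over $Q$ shows $c(F)$ is independent of $F$.

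For the converse, I would split on the size $n$. The case $n = 2$ is trivial, since the problem has only the two feasible solutions of the size-$2$ subproblem, so COVP is literally property~$P_2$ and Lemma~\ref{lem:planar} applies. The case $n \geq 4$ is handled by Proposition~\ref{prop:planar} (which yields $P_2$) followed by Lemma~\ref{lem:planar}. What remains is the case $n = 3$, which as the text signals requires a different argument.

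For $n = 3$ my plan is dimension counting. Let $\mathcal{C} \subseteq \mathbb{R}^{3^d}$ denote the vector subspace of cost arrays with the COVP; it is a subspace because the conditions $c(F) = c(F')$ are linear in $C$. The ``if'' direction gives ${\text{\sc SAVS}}(d, d-1, 3) \subseteq \mathcal{C}$, and by Proposition~\ref{prop:SAVS}(iv) we have $\dim({\text{\sc SAVS}}(d, d-1, 3)) = 3^d - 2^d$. It therefore suffices to prove $\dim(\mathcal{C}) \leq 3^d - 2^d$, i.e.\ to exhibit $2^d$ pairs of feasible solutions $(F_k, F_k')$ whose characteristic-vector differences $\chi_{F_k} - \chi_{F_k'}$ are linearly independent in $\mathbb{R}^{3^d}$. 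The natural family is indexed by $(i_1, \ldots, i_d) \in \{2, 3\}^d$: for each such tuple, I would construct a pair of feasible solutions of the $(d,d-1)$-AP that coincide outside the subarray $\{1, i_1\} \times \cdots \times \{1, i_d\}$ and realize the two feasible solutions of the size-$2$ subproblem inside it. Linear independence of these $2^d$ differences would then follow from a triangularity argument with respect to coordinate-wise containment of the supports.

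The main obstacle is the combinatorial construction of such pairs when $n = 3$. The Latin-square swap used in Proposition~\ref{prop:planar} needed an order-$n$ Latin square containing a prescribed $2 \times 2$ corner, which fails at $n = 3$. For $d \geq 5$ I would build the pairs recursively in $d$ via Observation~\ref{obs2}, distributing the required $2 \times 2$ swap across several layers of the recursion and exploiting the existence of two mutually orthogonal Latin squares of order $3$ to absorb the incompatibility that blocks a single-layer swap. The remaining small cases $n = 3$, $d \in \{3, 4\}$, can be dispatched by a direct finite-dimensional verification, using that the sets of Latin squares and Graeco--Latin squares of order~$3$ are explicitly enumerable and the required rank of the difference system can be checked by hand or by a short computation.
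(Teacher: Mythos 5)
Your ``if'' direction, your treatment of $n=2$ and $n\geq 4$ via Proposition~\ref{prop:planar} and Lemma~\ref{lem:planar}, and your overall frame for $n=3$ (COVP arrays form a subspace ${\cal C}$ containing {\sc SAVS}$(d,d-1,3)$, so it suffices to bound $\dim{\cal C}$ by $3^d-2^d$ via $2^d$ linearly independent difference vectors of feasible solutions) all match the paper in spirit. The gap is in the one step where the real work lies: the pairs $(F_k,F_k')$ you want to use do not exist when $n=3$. You ask for two feasible solutions of the $(d,d-1)$-AP that coincide outside a subarray $\{1,i_1\}\times\cdots\times\{1,i_d\}$ and realize the two size-$2$ feasible solutions inside it. For $d=3$ this would mean two order-$3$ Latin squares that agree outside a $2\times 2$ subsquare and differ by swapping it; that requires the subsquare to be an intercalate (equal diagonal symbols), which is impossible in a Latin square of order $3$ (if $L(1,1)=L(2,2)$ and $L(1,2)=L(2,1)$, both rows would need the same third symbol in column $3$). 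Hence two distinct order-$3$ Latin squares never agree outside a $2\times2$ subsquare. By Observation~\ref{obs2} this propagates upward: slicing by the coordinate whose restricted index set is $\{1,i_1\}$, the slice indexed by the third value lies entirely outside the subarray and is forced to be equal in both solutions, while the other two slices are feasible solutions of the $(d-1,d-2)$-AP agreeing outside a $2\times\cdots\times2$ subarray, so by induction (base case $d=3$) they are equal too. So for every $d\geq 3$ and $n=3$ there are \emph{no} two distinct feasible solutions differing only inside such a subarray --- this is exactly the obstruction that forces the exclusion of $n=3$ in Proposition~\ref{prop:planar} --- and your family of difference vectors is empty, so the triangularity argument has nothing to act on.

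Your proposed repair for $d\geq 5$ (``distributing the $2\times2$ swap across several layers of the recursion, exploiting two mutually orthogonal Latin squares of order $3$'') is not a construction but a hope, and it cannot yield the stated pairs, since those were just shown not to exist; any pairs it does yield would differ outside the designated subarray, and then the coordinate-wise support containment on which your linear-independence argument rests breaks down. The paper closes this hole differently: for $n=3$ it takes \emph{all} $3\cdot 2^{d-1}$ feasible solutions at once, forms the incidence matrix $M_d$ of solutions versus cells, exhibits a recursive block structure $A_k,B_k,C_k$, and proves $\mathrm{rank}(M_d)\geq 2^d+1$ by extracting an explicit $(2^d+1)\times(2^d+1)$ submatrix and computing its determinant through coupled recursions (Lemma~\ref{lem:rank}); this rank bound plays precisely the role your nonexistent $2^d$ independent differences were meant to play. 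Your suggestion to settle $d\in\{3,4\}$, $n=3$ by finite computation is fine as far as it goes, but without a working argument for general $d$ the $n=3$ case --- the only genuinely hard case of the theorem --- remains open in your proposal.
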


\begin{proof}
If  the cost array $C$ is sum-decomposable, then it is straightforward to see that every feasible solution has the same objective value. 
		
Conversely, assume that every feasible solution has the same objective value.
For the case $n\ne 3$ the statement follows from Proposition~\ref{prop:planar}
and Lemma~\ref{lem:planar}. For the remaining case $n=3$ we make use of the same technique that has been
used in~\cite{GLS85,LR79} to obtain a COVP characterization for the TSP. Let ${\cal C}(d,n)$ denote the collection of all $d$-dimensional 
$n\times n\times \cdots \times n$ cost arrays $C$ for which all 
feasible solutions of the $(d,d-1)$-AP have the same 
objective value. Clearly ${\cal C}(d,n)$ is a linear subspace of the set of all 
$d$-dimensional  $n\times n\times \cdots \times n$ arrays. Our goal is to prove that 
\begin{equation}\label{cd3}
{\cal C}(d,3)={\text{\sc SAVS}}(d,d-1,3).
\end{equation}
To that end, we consider all feasible solutions of the $(d,d-1)$-AP for $n=3$. 
Next we build up the 0-1 matrix $M_d$ where the rows of $M_d$ correspond to the feasible solutions
and the columns correspond to the $d$-tuples over $\{1,2,3\}$.
The entry of $M_d$ that corresponds to the feasible solution $F$ and the 
$d$-tuple $(i_1,i_2,\ldots,i_d)$ is set to 1 if and only if  
$(i_1,i_2,\ldots,i_d)\in F$.

Note that every row of the 
matrix $M_{d+1}$ is obtained from three disjoint rows of the matrix $M_{d}$. 
For every row $r_1$ of the matrix $M_{d}$ there are exactly two rows 
$r_2$, $r_3$ disjoint with $r_1$, and $r_2$ and $r_3$ are also mutually
disjoint. Therefore $r_1r_2r_3$ and  $r_1r_3r_2$ are rows of $M_{d+1}$. Hence
the matrix $M_{d+1}$ has twice as many rows as $M_{d}$. This corresponds to the
fact that for $n=3$ the number of feasible solutions doubles when 
moving from the planar $d$-dimensional assignment problem to the
$(d+1)$-dimensional one. It is easy to see that $M_d$ is a $3\cdot 2^{d-1}\times 3^d$ matrix.
The following matrices $M_{d}$, $d=1,2$ are provided as illustration:
\smallskip

$M_1=\begin{pmatrix}
	1&0&0 \\
	0&1&0 \\
	0&0&1
\end{pmatrix}$,\  \ 
$M_2=\left( \begin{array}{ccc|ccc|ccc}
	1&0&0&0&1&0 &0&0&1\\
	0&1&0 &1&0&0&0&0&1\\
	0&0&1&1&0&0&0&1&0  \\ \hline
	1&0&0&0&0&1&0&1&0\\
	0&1&0&0&0&1&1&0&0\\
	0&0&1&0&1&0&1&0&0
\end{array}\right).$ 
\smallskip

${\cal C}(d,3)$ is the solution space of the system of linear equations with
coefficient matrix $M_d$ and a constant right hand side vector. 
Thus we obtain
\[
\dim {\cal C}(d,3)=3^d+1-\text{rank}(M_d).
\]
From Proposition~\ref{prop:SAVS}~(iv)
we know that
$\dim({\text{\sc SAVS}}(d,d-1,3))=3^d-2^d$. 
Hence in order to prove that \eqref{cd3} holds, 
we need to show that $\text{rank}(M_d)=2^d+1$.
Observe that in fact it suffices to show that $\textnormal{rank}(M_d)\ge 2^d+1$
since obviously ${\text{\sc SAVS}}(d,d-1,3)\subseteq {\cal C}(d,3)$;
Lemma~\ref{lem:rank} below completes the proof.
\end{proof}

\begin{lemma}\label{lem:rank} 
Let $M_d$ be the matrix constructed above. We have
$$\textnormal{rank}(M_d)\geq 2^d+1.$$
\end{lemma}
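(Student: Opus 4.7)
My plan is to prove by induction on $d$ the (stronger) exact identity $\text{rank}(M_d) = 2^d + 1$. The base case $d = 1$ is immediate since $M_1$ is the $3 \times 3$ identity matrix. For the inductive step I aim to establish the recursion $\text{rank}(M_{d+1}) = 2\,\text{rank}(M_d) - 1$, which combined with the inductive hypothesis gives $\text{rank}(M_{d+1}) \geq 2(2^d+1) - 1 = 2^{d+1} + 1$. The central tool is the involution $\tau$ on $\mathbb{R}^{3^{d+1}}$ that swaps the second and third blocks of coordinates when the columns of $M_{d+1}$ are grouped by the value of $i_1 \in \{1,2,3\}$. Since whenever $(F_1,F_2,F_3)$ indexes a row of $M_{d+1}$ then so does $(F_1,F_3,F_2)$, the row space of $M_{d+1}$ is $\tau$-invariant and decomposes as the direct sum of its $\tau$-symmetric and $\tau$-antisymmetric parts, which I denote $\text{Sym}$ and $\text{Anti}$.

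The symmetric part $\text{Sym}$ is spanned by the vectors $R + \tau R = (2F_1,\,\mathbf{1}-F_1,\,\mathbf{1}-F_1)$ (using $F_1+F_2+F_3 = \mathbf{1}$ since the supports partition $\{1,2,3\}^d$) as $F_1$ ranges over the rows of $M_d$. Because every feasible solution of the $(d,d-1)$-AP at $n=3$ has exactly $3^{d-1}$ elements, summing entries in any relation $\sum_F a_F F = 0$ yields $\sum_F a_F = 0$; hence the coefficient-sum $\alpha$ is a well-defined linear functional on $V := \text{rowspace}(M_d)$, and the map $v \mapsto (2v,\,\alpha(v)\mathbf{1}-v,\,\alpha(v)\mathbf{1}-v)$ is a linear injection $V \hookrightarrow \text{Sym}$, yielding $\dim(\text{Sym}) = \text{rank}(M_d)$.

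The antisymmetric part $\text{Anti} = \{(0,y,-y) : y \in \mathbb{R}^{3^d}\} \cap \text{rowspace}(M_{d+1})$ is spanned by the vectors $R - \tau R = (0,F_2-F_3,F_3-F_2)$. By the recursion noted in the paper, the rows of $M_d$ partition into $2^{d-1}$ triples $P = \{F,G,H\}$ whose supports partition $\{1,2,3\}^d$; projection onto the second block thus identifies $\text{Anti}$ linearly with $\sum_P V_P$, where $V_P := W_P \cap H$, $W_P := \text{span}(P)$, and $H := \{v \in \mathbb{R}^{3^d} : \langle v,\mathbf{1}\rangle = 0\}$. Writing any $v \in V \cap H$ as $\sum_P w_P$ with $w_P \in W_P$ and splitting each $w_P = w_P' + c_P \mathbf{1}$ with $w_P' \in V_P$, the condition $v \in H$ forces $\sum_P c_P = 0$, so $v = \sum_P w_P' \in \sum_P V_P$; combined with the obvious reverse inclusion this gives $\sum_P V_P = V \cap H$. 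Since $\mathbf{1} \in V \setminus H$, we obtain $\dim(V \cap H) = \text{rank}(M_d) - 1$. Adding the two contributions gives $\text{rank}(M_{d+1}) = 2\,\text{rank}(M_d) - 1$, closing the induction. The step I expect to require the most care is the identification $\sum_P V_P = V \cap H$, which crucially uses the partitioning of feasible solutions into complementary triples; once this is in place both dimension counts are routine.
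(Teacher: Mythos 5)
Your proof is correct, but it takes a genuinely different route from the paper's. The paper proves the inequality by exhibiting an explicit regular $(2^d+1)\times(2^d+1)$ submatrix $M'_d$: it encodes $M_d=A_{d+1}$ through the recursive blocks $A_k,B_k,C_k$, deletes rows and columns so that the recursion survives, and evaluates $\det M'_d$ via a coupled system of determinant recursions, ending with $\det M'_d=3^{d2^{d+1}+1}\neq 0$. You instead determine the rank exactly by induction, splitting the row space of $M_{d+1}$ under the involution $\tau$ that swaps the $i_1=2$ and $i_1=3$ column blocks: the $\tau$-symmetric part is isomorphic to $\mathrm{rowspace}(M_d)$ (your map $v\mapsto(2v,\alpha(v)\mathbf{1}-v,\alpha(v)\mathbf{1}-v)$ with $\alpha(v)=3^{-(d-1)}\langle v,\mathbf{1}\rangle$ is linear, injective on the first block, and hits the spanning vectors $R+\tau R$), while the antisymmetric part projects isomorphically onto $\sum_P V_P=\mathrm{rowspace}(M_d)\cap\mathbf{1}^{\perp}$, where your identification correctly uses that $\mathbf{1}=F+G+H\in W_P$ for each complementary triple $P$. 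This yields $\mathrm{rank}(M_{d+1})=2\,\mathrm{rank}(M_d)-1$ and hence the exact value $2^d+1$, which is stronger than the lemma and is precisely the equality the paper only remarks could be obtained by row reduction. Both arguments lean on the same structural facts the paper asserts without detailed proof (each feasible solution for $n=3$ has exactly two disjoint partners, themselves disjoint, so the rows of $M_d$ split into $2^{d-1}$ complementary triples whose orderings give the rows of $M_{d+1}$), so you are not assuming more than the paper does. In short, your approach buys the exact rank with no determinant bookkeeping, while the paper's buys a concrete nonsingular submatrix with a closed-form determinant.
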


\begin{proof}
We start with observing the following recursive structure of $M_d$.
Define
$$A_0=\begin{pmatrix}
	1&0&0 \\
	0&1&0 \\
	0&0&1
\end{pmatrix}\qquad
B_0=\begin{pmatrix}
	0&1&0 \\
	1&0&0 \\
	1&0&0
\end{pmatrix}
\qquad
C_0=\begin{pmatrix}
	0&0&1 \\
	0&0&1 \\
	0&1&0
\end{pmatrix},$$
and recursively for $k\ge 1$
$$
A_k=\begin{pmatrix}
A_{k-1} & B_{k-1} & C_{k-1}\\
A_{k-1} & C_{k-1} & B_{k-1}\\
\end{pmatrix}\qquad
B_k=\begin{pmatrix}
B_{k-1} & C_{k-1} & A_{k-1}\\
B_{k-1} & A_{k-1} & C_{k-1}\\
\end{pmatrix}$$
$$
C_k=
\begin{pmatrix}
C_{k-1} & A_{k-1} & B_{k-1}\\
C_{k-1} & B_{k-1} & A_{k-1}\\
\end{pmatrix},
$$
where $A_k$, $B_k$ and $C_k$ are $3\cdot 2^{k}\times 3^{k+1}$ matrices.
It is easy to see that $M_d=A_{d+1}$ for $d\geq 1$.
Next we will exhibit a regular $(2^d+1)\times (2^d+1)$ 
submatrix $M'_d$ of $M_d$ which will settle the lemma.  
We construct new matrices $A'_k$, $B'_k$ and 
$C'_k$ from $A_k$, $B_k$ and $C_k$ as follows: First, remove 
all columns with indices $\ge 2\cdot 3^{k}+1$. Next, remove all rows and
columns with indices that are divisible by 3. It is straightforward to
observe that the recursive structure survives this construction.
More precisely we have
\begin{equation}\label{eq:rec}
A'_k=\begin{pmatrix}
A'_{k-1} & B'_{k-1}\\
A'_{k-1} & C'_{k-1}\\
\end{pmatrix}\
B'_k=\begin{pmatrix}
B'_{k-1} & C'_{k-1}\\
B'_{k-1} & A'_{k-1}\\
\end{pmatrix}\
C'_k=
\begin{pmatrix}
C'_{k-1} & A'_{k-1}\\
C'_{k-1} & B'_{k-1} 
\end{pmatrix}
\end{equation}
for $k\ge 1$ and 
$$A_0'=\begin{pmatrix}
	1&0 \\
	0&1
\end{pmatrix}\qquad
B'_0=\begin{pmatrix}
	0&1\\
	1&0
\end{pmatrix}
\qquad
C'_0=\begin{pmatrix}
	0&0 \\
	0&0
\end{pmatrix}.$$
The matrices $A'_k$, $B'_k$ and $C'_k$ have $2^{k+1}$ rows and $2^{k+1}$ columns. We obtain our target matrix $M'_d$ from the matrix 
$A'_{d+1}$ by re-inserting row 3 and column 3 of the matrix $A_{d+1}$. 
In order to show  that $M'_d$ is regular, we will
calculate its determinant by a recursive approach. We will make use of the observation that the upper left 
and lower left block are identical in the matrices 
$A'_k$, $B'_k$ and $C'_k$. This will allow us to create a zero block as 
lower left block of a reduced matrix which has the same determinant as 
$A'_k$. 
This results in
\begin{equation}\label{eq:det1}
\det A'_k=\det A'_{k-1}\det\left(C'_{k-1}-B'_{k-1}\right)
\end{equation}
for $k\ge 1$. An analogous argument yields
\begin{equation}\label{eq:det2}
\det\left(C'_{k}-B'_{k}\right)=
\det\left(C'_{k-1}-B'_{k-1}\right)
\det\left(B'_{k-1}+C'_{k-1}-2A'_{k-1}\right)
\end{equation}
and
\begin{equation}\label{eq:det3}
\det\left(B'_{k}+C'_{k}-2A'_{k}\right)=
\det\left(B'_{k-1}+C'_{k-1}-2A'_{k-1}\right)
\det \left( 3\left(B'_{k-1}-C'_{k-1}\right)\right)
\end{equation}
for $k\ge 1$. Furthermore observe that
\begin{equation}\label{eq:det4}
\det\left( 3\left(B'_{k-1}-C'_{k-1}\right)\right)=3^{2^{k}}\det\left(C'_{k-1}-B'_{k-1}\right)
\end{equation}
as the involved matrices are of size $2^k\times 2^{k}$. Let 
$$z_k=\det A'_k, \ u_k=\det \left(C'_k-B'_k\right),
\ v_k=\det \left(B'_k+C'_k-2A'_k\right).
$$
By explicit calculations we get the initial values $z_0=1,u_0=-1, v_0=3.$
From (\ref{eq:det1})--(\ref{eq:det4}) we obtain the following recursions for
$k\ge 1$
\begin{equation}\label{def:zk}
z_{k}=z_{k-1}u_{k-1},\qquad u_{k}=u_{k-1}v_{k-1},\qquad v_{k}=3^{2^{k}}v_{k-1}u_{k-1}.
\end{equation}
By combining the second and the third equation in (\ref{def:zk}) we obtain
$v_k=3^{2^k}u_k$ which allows to eliminate $v_k$.
We obtain the new system of recursions 
\begin{equation}\label{def:zknew}
z_k=z_{k-1}u_{k-1},\qquad u_k=3^{2^{k-1}}u^2_{k-1},\qquad k\ge 1.
\end{equation}
 This already implies that all matrices
$A'_k$ are regular, but for the sake of completeness we provide the solution
for the recursion above. It is not hard to show that 
$$u_k=3^{k2^{k-1}}, \quad z_k=3^{(k-2)2^{k-1}+1}
$$
provides a solution to the system (\ref{def:zknew}) with the
initial conditions $z_0=1$ and $u_0=-1$.
As a consequence thereof we get that 
\[
\det A'_{d+1}=3^{d2^{d+1}+1}.
\]
Note that $M'_d$ differs from  $A'_{d+1}$ only in its additional row and
additional column. The additional column (the third column) of $M'_d$ 
corresponds to the third unit vector. By developing the determinant 
of $M'_d$ with respect to this column, we obtain
$$
\det M'_d=\det A'_{d+1}=3^{d2^{d+1}+1},
$$
which implies that $M'_d$ is regular and hence 
$\text{rank}(M_d)\geq 2^d+1$.
\end{proof}

Let us mention that one can show that $\text{rank}\left(M_d\right)=2^d+1$
by calculating the reduced row echelon form of matrix
$M_d$. For our purposes it sufficed to show a weaker 
result which could be obtained more elegantly.

\subsection{The COVP for the general case: ${(d,s)}$-AP}

Theorem~\ref{thm:axial} and Theorem~\ref{thm:planar} impose the following
question for the $(d,s)$-AP.

\begin{question}\label{conj1}
Is it true that a feasible instance of the $(d,s)$-AP with cost array $C$ 
has the COVP if and only if  $C$ is
a sum-decomposable array with parameters $d$ and $s$?
\end{question}

Theorem~\ref{thm:axial} and Theorem~\ref{thm:planar} imply that the answer to Question~\ref{conj1}
is affirmative in the following cases: $(2,1)$-AP, $(3,1)$-AP, $(3,2)$-AP, 
$(4,1)$-AP and
$(4,3)$-AP. This leaves us with the $(4,2)$-AP as the smallest unsettled
case. This is also the smallest case for which it is not guaranteed that a 
feasible solution exists for all $n\ge 2$. 

The following example shows that the answer to Question~\ref{conj1} is negative in general.

\begin{example}\label{count}
There are 72 Graeco-Latin squares of size 3, hence there are 72 feasible 
solutions for the $(4,2)$-AP with $n=3$, see Observation~\ref{obs3}. We consider the system of linear equations that is obtained by requiring
that all 72 feasible solutions have the same objective value.
The dimension of the solution space of this system of equations, and thus the
dimension of the space of cost arrays with the COVP, is 49, which can
easily be calculated by a computer algebra system. 
By  Proposition~\ref{prop:SAVS} one gets that the dimension 
of {\sc SAVS}(4,2,3) is 33. Hence, there exists a cost array with the COVP that is 
not sum-decomposable. Now we provide one such array.

Let $C$ be the $3\times 3\times 3 \times 3$ array where
$c(1,1,1,2)$, $c(1,1,2,1)$, $c(1,2,1,1)$, $c(1,2,2,2)$, $c(2,1,1,1)$,
$c(2,1,2,2)$, $c(2,2,1,2)$, $c(2,2,2,1)$ and $c(3,3,3,3)$ have value 1 and 
all other entries have value 0. All 72 feasible solutions of the (4,2)-AP 
with this cost array have the objective value 1, and it is  easy to check
that $C$ is not sum-decomposable.
\end{example}

We did not find counterexamples for the $(4,2)$-AP for $n\ge 4$. For $n=4$ and $n=5$ the computer calculations gave the affirmative answer to Question~\ref{conj1}. For $n=6$ there are no feasible
solutions and for $n=7$ the number of feasible solutions gets too large to handle.

\begin{conjecture}\label{conj2}
A feasible instance of the $(4,2)$-AP with cost array $C$ of size $n\neq 3$ 
has the COVP if and only if  $C$ is
a sum-decomposable array with parameters $4$ and $2$.
\end{conjecture}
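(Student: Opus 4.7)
The forward direction is immediate: if $C=\sum_{Q\in\mathcal{Q}_2}A^Q$, then for each $Q$ and each feasible solution $F$ the partial sum $\sum_{(i_1,\ldots,i_4)\in F}a^Q(h_Q(i_1,\ldots,i_4))$ equals $\sum_{k_1,k_2=1}^n a^Q(k_1,k_2)$ independent of $F$, exactly as in the easy directions of Theorems~\ref{thm:axial} and~\ref{thm:planar}.

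For the reverse direction my plan is to follow the linear-algebra template used for the $n=3$ case in the proof of Theorem~\ref{thm:planar}. Let $M_n$ be the $0/1$ incidence matrix whose rows are indexed by feasible solutions of the $(4,2)$-AP of size $n$ (equivalently, by pairs of mutually orthogonal Latin squares of order $n$, cf.~Observation~\ref{obs3}) and whose columns are indexed by the $4$-tuples in $\{1,\ldots,n\}^4$. Since every row of $M_n$ has the same sum $n^2$, the argument of Theorem~\ref{thm:planar} yields $\dim\mathcal{C}(4,2,n)=n^4+1-\operatorname{rank}(M_n)$. A routine inclusion-exclusion based on Proposition~\ref{prop:SAVS}(iii) --- noting that any two distinct $2$-subsets of $\{1,2,3,4\}$ intersect in either one or zero elements, that $4$ of the $20$ triples of such $2$-subsets have a common element while the remaining $16$ have empty intersection, and that all quadruple and higher intersections are empty --- yields $\dim\text{\sc SAVS}(4,2,n)=6n^2-8n+3$. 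Together with the easy inclusion $\text{\sc SAVS}(4,2,n)\subseteq\mathcal{C}(4,2,n)$ this gives $\operatorname{rank}(M_n)\le n^4-6n^2+8n-2$, so the entire task reduces to proving the matching lower bound for $n\ge 4$, $n\ne 6$.

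To establish the lower bound I would pursue two parallel strategies. The first is a local-swap approach: identify pairs $(F,F')$ of feasible solutions differing on only a few $4$-tuples, extract from each such pair a short linear relation satisfied by every $c\in\mathcal{C}(4,2,n)$, and then show that the span of these relations equals the annihilator of $\text{\sc SAVS}(4,2,n)$. A natural candidate are synchronised intercalate swaps: $2\times 2$ rectangles of positions that form a Latin intercalate in both components of a pair of MOLS simultaneously can be transposed without destroying orthogonality. The second strategy is a reduction to the computer-verified cases $n=4,5$: show that every $\{a_1,a_2,a_3,a_4\}^4$-sub-array of a COVP array of size $n$ itself has the COVP for its induced $4\times 4\times 4\times 4$ problem, so that each sub-array is sum-decomposable by the base case, and then glue by letting the sub-array vary over all choices of four indices in each coordinate.

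The principal obstacle is the absence of a clean local combinatorics for Graeco-Latin squares: unlike the planar case, the smallest feasible sub-problem here is of size $n=3$, and there the conjecture already fails by Example~\ref{count}, so no analogue of Property~$P_2$ can exist. This makes the local-swap approach delicate --- one must verify that enough larger combinations of synchronised intercalates survive and produce the right relations --- while the reduction approach hinges on a uniform MOLS completion theorem: every pair of MOLS of order $4$ supported on prescribed coordinates must extend to a pair of MOLS of order $n$ in such a way that the cost contribution of the $4$-tuples outside the sub-array is independent of the extension. I expect verifying such a completion property for all $n\ge 7$, possibly combined with the representation-theoretic invariance of $\mathcal{C}(4,2,n)$ and $\text{\sc SAVS}(4,2,n)$ under the natural $S_n^4\rtimes S_4$-action on cost arrays, to be the main difficulty of any complete proof of Conjecture~\ref{conj2}.
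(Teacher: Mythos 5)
The statement you set out to prove is stated in the paper only as Conjecture~\ref{conj2}: the authors give no proof, just computational verification for $n=4$ and $n=5$ and the explicit remark that the set of feasible solutions (pairs of orthogonal Latin squares) is too poorly understood for an explicit argument. Your proposal does not close this gap. The parts you actually carry out are correct: the easy direction; the identity $\dim\mathcal{C}=n^4+1-\operatorname{rank}(M_n)$ (valid because the all-ones cost array shows the constant vector lies in the image of $M_n$); and the count $\dim({\text{\sc SAVS}}(4,2,n))=6n^2-8n+3$ via Proposition~\ref{prop:SAVS}, which is consistent with the value $33$ at $n=3$ used in Example~\ref{count}. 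But after this reduction the entire content of the conjecture is the matching lower bound $\operatorname{rank}(M_n)\ge n^4-6n^2+8n-2$ for $n\ge 4$, $n\ne 6$, and neither of your two strategies establishes it; both are left as programmes whose key lemmas are exactly the open difficulty.

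Concretely: for the local-swap strategy you would have to prove that the relations arising from synchronised intercalate swaps (plus whatever larger moves you admit) span a space of dimension $n^4-6n^2+8n-2$; you give no argument that such swaps exist in sufficient abundance for every pair of MOLS of order $n$, nor that their span is that large. For the reduction strategy, the step ``every $4\times4\times4\times4$ sub-array of a COVP array has the COVP for its induced problem'' requires precisely the synchronised completion property you name at the end: two \emph{different} feasible solutions of the induced size-$4$ problem must extend to feasible solutions of the size-$n$ problem that \emph{agree outside the sub-array}. Known MOLS embedding theorems give extensions of a single sub-solution, not a common completion of two prescribed ones, and Example~\ref{count} shows that the analogous size-$3$ subproblem genuinely misbehaves, so no analogue of property $P_2$ with $2\times2\times2\times2$ blocks is available as a fallback. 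Moreover, even granting sub-array COVP and hence sum-decomposability of each block by the computer-verified base case, the gluing of the local decompositions into one global decomposition (the analogue of Lemma~\ref{lem:planar}) is asserted rather than proved. In short, your write-up is a reasonable research plan that rederives the dimension bookkeeping, but the statement remains a conjecture both in the paper and after your proposal.
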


We believe that Example~\ref{count} occurs since for $n=3$ the number of feasible solutions is relatively small, but then grows very fast. 
Note that for larger values of $n$ even the number of Graeco-Latin squares is unknown. This eliminates explicit proof approaches as the set of feasible solutions is not known. A proof would need to exploit the structure of the set of Graeco-Latin squares.

We checked that the answer to Question~\ref{conj1} for the
$(5,2)$-AP for $n=4$ is affirmative. For $n=2,3,6$ there are no feasible solutions.
For $n=5$ the set of feasible solutions became too large for our straightforward
computational approach. The same happened for the $(5,3)$-AP for $n=4$, and for $n=2,3$ the problem is again infeasible. 
The motivation behind our experiments was our wish to obtain a feeling whether the answer to Question~\ref{conj1} is affirmative for sufficiently large $n$. We believe so, but we could handle only very small cases and do not have enough empirical results to propose a conjecture.

%%%%%%%%%%%%%%%%%%%%%%%%%%%%%%%%%%%%%%%

\section{The COVP for $d$-dimensional transportation\\
problems}\label{section:transportation}

In this section we deal with the COVP for $d$-dimensional transportation 
problems. Specifically, we show that our COVP characterization for the
axial $d$-dimensional assignment problem carries over to the
axial $d$-dimensional transportation problem while this approach fails for the
more involved planar case.

Multi-dimensional transportation problems are known in the literature 
under diverse names. Alternative names are for example multi-index or $d$-index transportation problems, $d$-fold transportation problems and 
multi-way or $d$-way transportation problems, see e.g.~\cite{DLO06,QS09}.

The $d$-dimensional transportation problem can be defined along the lines of
the definition of the $d$-dimensional assignment problem $(d,s)$-AP.
We are given a $d$-dimensional  $n_1\times n_2 \times \cdots \times n_d$ cost 
array $C$. While in the assignment case the right hand side of all equality 
constraints is equal to one, in the transportation case we are additionally 
given an $s$-dimensional array $B^Q$ for each set $Q \in {\mathcal Q}_s$ 
of fixed indices which provides the right hand side values for this group of
constraints induced by the set $Q$. 
The arrays $B^Q$ can be viewed as marginals for the transportation
array $X=(x(i_1,i_2,\ldots,i_d))$. We refer to the resulting transportation
problem as $(d,s)$-TP.

Like for the assignment case, we obtain the {\em axial
 $d$-dimensional transportation problem\/} when $s=1$ and the 
{\em planar $d$-dimensional transportation problem\/} when $s=d-1$.
As we will deal with the axial $d$-dimensional transportation problem below, we
provide its explicit formulation. 

We are given an
$n_1\times n_2\times \cdots\times n_d$ cost array $C=(c(i_1,i_2,\ldots,i_d))$
and $d$ supply-demand vectors $B_1,\ldots,B_d$, where the $k$-th vector 
$B_k=(b_k(i))$ is an $n_k$-dimensional vector over the nonnegative integers.
Furthermore we assume $\sum_{i=1}^{n_1} b_1(i)=
\sum_{i=1}^{n_2}b_2(i)=\cdots =\sum_{i=1}^{n_d}b_d(i)$. 
Let $I_r=\{1,\ldots,n_r\}$ be the index set for $i_r$, $r=1,\ldots,d$.
We obtain the following formulation for the $(d,1)$-TP:
\begin{align*}
\min&\sum_{i_1\in I_1}\sum_{i_2\in I_2}\ldots\sum_{i_d\in I_d} 
c(i_1,i_2,\ldots,i_d)x(i_1,i_2,\ldots,i_d)\\
\text{s.\@t. }&\sum_{\substack{i_1\in I_1,\ldots,i_d\in I_d\\ \text{s.t. } i_k=j}
}x(i_1,i_2,\ldots,i_d)=b_k(j) \text{ for all }
k\in\{1,\ldots,d\}, j\in \{1,\ldots, n_k\}\\
&x(i_1,i_2,\ldots,i_d)\ge 0 \hspace{75pt} \text{ for all }
i_r=1,\ldots,n_r,\ r=1,\ldots,d.
\end{align*}

If $X=(x(i_1,\ldots,i_n))$ has to be integral, the problem above becomes 
NP-hard for $d\ge 3$. For $d=2$ the well-known classical Hitchcock
transportation problem arises.

\begin{theorem}\label{thm:transp}
An instance of the axial $d$-dimensional transportation problem with cost array
$C$ has the COVP if and only if $C$ is sum-decomposable array with parameters $d$ and 1.
\end{theorem}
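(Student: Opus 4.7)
The ``if'' direction is routine: if $c(i_1,\ldots,i_d)=\sum_{k=1}^d v_k(i_k)$ for some vectors $V_k$, then for any feasible $X$ of the $(d,1)$-TP,
\[
\sum_{i_1,\ldots,i_d} c(i_1,\ldots,i_d)x(i_1,\ldots,i_d)
=\sum_{k=1}^d\sum_{j=1}^{n_k}v_k(j)\!\!\sum_{\substack{(i_1,\ldots,i_d)\\ i_k=j}}\!\!x(i_1,\ldots,i_d)
=\sum_{k=1}^d\sum_{j=1}^{n_k}v_k(j)b_k(j),
\]
which depends only on $C$ and the marginals, not on $X$.

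For the converse, the plan is to reduce the transportation problem to the axial assignment problem already handled by Theorem~\ref{thm:axial} via the standard ``copy-expansion'' trick. Let $N=\sum_{j}b_1(j)=\cdots=\sum_{j}b_d(j)$, and for each coordinate $k$ fix a surjection $\pi_k\colon\{1,\ldots,N\}\to\{1,\ldots,n_k\}$ with $|\pi_k^{-1}(j)|=b_k(j)$ for all $j$. Define the expanded $N\times N\times\cdots\times N$ cost array $\tilde C$ by $\tilde c(\tilde\imath_1,\ldots,\tilde\imath_d)=c(\pi_1(\tilde\imath_1),\ldots,\pi_d(\tilde\imath_d))$. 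Every feasible solution $F$ of the $(d,1)$-AP of size $N$ with array $\tilde C$ then induces a feasible integer solution of the original TP via $x(j_1,\ldots,j_d)=|\{(\tilde\imath_1,\ldots,\tilde\imath_d)\in F:\pi_k(\tilde\imath_k)=j_k\text{ for all }k\}|$, and the two objective values are equal by construction. Consequently COVP for the TP implies COVP for the expanded $(d,1)$-AP. Applying Theorem~\ref{thm:axial} to $\tilde C$ yields a decomposition $\tilde c(\tilde\imath_1,\ldots,\tilde\imath_d)=\tilde v_1(\tilde\imath_1)+\cdots+\tilde v_d(\tilde\imath_d)$. Since $\tilde c$ depends on $\tilde\imath_k$ only through $\pi_k(\tilde\imath_k)$, fixing the other coordinates and varying $\tilde\imath_k$ within a fiber $\pi_k^{-1}(j)$ forces each $\tilde v_k$ to be constant on that fiber. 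Setting $v_k(j)$ equal to this common value then yields $c(j_1,\ldots,j_d)=v_1(j_1)+\cdots+v_d(j_d)$, i.e., $C$ is sum-decomposable with parameters $d$ and $1$.

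I expect no serious obstacle once the reduction is set up — the hard work has already been done in Theorem~\ref{thm:axial}. The only care needed is around degenerate instances: if $N\le 1$ the TP has at most one feasible solution and the statement is vacuous, and if $b_k(j)=0$ for some $(k,j)$ the corresponding slice of $C$ is not touched by any feasible solution and so can be given an arbitrary decomposable extension. Under the mild assumption that all $b_k(j)$ are positive (and $N\ge 2$), the reduction and descent argument above go through cleanly without modification.
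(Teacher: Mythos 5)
Your proof is correct and follows essentially the same route as the paper: blow up each supply/demand value into unit copies to reduce to the axial $(d,1)$-AP and then invoke Theorem~\ref{thm:axial} (your observation that COVP over all, possibly fractional, feasible solutions of the TP in particular forces COVP for the induced assignment solutions covers the non-integral case just as the paper's relaxation remark does). The only difference is cosmetic: you spell out the fiber-constancy argument by which the decomposition of the blown-up array $\tilde C$ descends to $C$, and you flag the zero-demand degeneracy, both of which the paper handles by simply asserting that the blow-up and its inverse preserve sum-decomposability.
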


\begin{proof}
Any instance of the integral axial $d$-dimensional transportation problem can 
be transformed into an equivalent instance of the 
axial $d$-dimensional assignment problem. 
To that end, we replace every supply/demand facility that has a supply/demand 
value $t>1$ by $t$ facilities with identical transportation costs that have 
supply/demand value $1$. In this manner we get an equivalent problem with a 
blown up $n\times n\times\cdots\times n$ cost array where 
$n=\sum_{i=1}^{n_1}b_1(i)$ and all supplies/demands are $1$.
Thus the newly obtained problem is the $(d,1)$-AP.

For the integral version of the $(d,1)$-TP we can apply the COVP
characterization from Theorem~\ref{thm:axial} directly.
For the non-integral version observe that the
transformed problem with unit supplies and demands is a relaxation of the
$(d,1)$-AP which results if the integrality constraints on $X$ are dropped.
In this case it follows from Theorem~\ref{thm:axial} that the set of 
instances with the COVP is a subspace of {\sc SAVS}$(d,1,n)$, and is hence equal to {\sc SAVS}$(d,1,n)$. 
Note that the transformation that blows up the cost array and the inverse
transformation preserve the sum-decomposability property of the cost
array. 
\end{proof}

Note that setting $d=2$ in Theorem~\ref{thm:transp} implies that an 
instance of the classical transportation problem with cost matrix $C$ 
has the COVP if and only if $C$ is a sum matrix. The proof of
Theorem~\ref{thm:transp} provides the connection to assignment problems and
further to Berenguer's COVP characterization for the TSP, cf.\
Theorem~\ref{thm:tsp}. As a by-product this reveals the nature of the 
connection between results of Klinz and Woeginger~\cite{KW11} on the
optimality of the North-West corner rule and
Theorem~\ref{thm:transp}, and thus answers an open problem mentioned 
in the concluding section of \cite{KW11}.

At first sight one might expect that Theorem~\ref{thm:planar} for the 
planar $d$-dimensional assignment problem $(d,d-1)$-AP carries over to the
planar $d$-dimensional transportation problem $(d,d-1)$-TP. However several
difficulties arise in this case. First, note that the blow-up technique to transform the
transportation problem to a (continuous) assignment problem does not 
work in general in the planar setting. The second and probably bigger obstacle 
to a COVP characterization for the
planar case comes from the fact that for $d\ge 3$ the $d$-dimensional 
planar transportation problem does not necessarily have feasible solutions 
(not even in the non-integral case). Due to the universality result of 
de Loera and Onn~\cite{DLO06} checking feasibility for the 3-dimensional planar
(integer) transportation problem is as hard as deciding whether a general 
linear (integer) program has a feasible solution (the result already holds for
a fixed third dimension, i.e., for $n_3=3$). As the number of feasible solutions of a feasible instance of the
$d$-dimensional transportation problem can be as small 
as one, even in the
non-integral case, it is not any longer necessary for the COVP that all
dual constraints have to be fulfilled with equality. Hence 
the approach based on the complementarity slackness 
condition that works for the linear assignment problem and the classical 
transportation problem, that was explained in the introduction, fails for $d\ge 3$.

Concluding, there  does not seem to be much hope to be able to provide a nice 
sufficient and necessary condition for the set of instances with the 
COVP for the 3-dimensional
planar transportation problem and even less hope for cases with $d>3$.

%%%%%%%%%%%%%%%%%%%%%%%%%%%%%%%%%%%%%%%

\section{The COVP for spanning tree, shortest path and 
matching problems}  \label{section:other}

In this section we provide COVP characterization 
for  the minimum spanning tree problem, the
shortest path problem and the minimum weight maximum cardinality 
matching problem.

\subsection{ The COVP for the minimum spanning tree problem} 

In the minimum spanning tree problem (MST)  we are given a connected,
undirected graph $G=(V,E)$ and edge weights $w_e$ for each edge $e\in E$. The task is to find a spanning tree for which the sum of edge
weights is minimal.

\begin{lemma}\label{lem:mst}
Let $I$ be an instance of the MST with graph $G$ and weights $w=(w_e)$. If $I$ has the COVP,
 then every edge in any \textup{(}simple\textup{)} cycle in $G$ has the same weight.
\end{lemma}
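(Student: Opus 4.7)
The plan is to exploit the classical swap property of spanning trees: for any spanning tree $T$ of $G$ and any edge $e\notin T$, adding $e$ to $T$ creates a unique cycle, and removing any edge of that cycle yields another spanning tree.

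Fix an arbitrary simple cycle $C$ in $G$ and let $e_1,e_2$ be any two edges of $C$. The goal is to build two spanning trees $T_1,T_2$ of $G$ which coincide outside $\{e_1,e_2\}$, with $e_2\in T_1\setminus T_2$ and $e_1\in T_2\setminus T_1$; applying the COVP to $T_1$ and $T_2$ will then force $w_{e_1}=w_{e_2}$.

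To construct $T_1$, note that the subgraph $P:=C\setminus\{e_1\}$ is a simple path, hence a forest in $G$. Since $G$ is connected, $P$ can be extended to a spanning tree $T_1$ of $G$ (e.g.\ by repeatedly adding any edge that joins two distinct components, as in Kruskal's algorithm). By construction, every edge of $C$ except $e_1$ lies in $T_1$, so adding $e_1$ to $T_1$ produces a cycle which must be exactly $C$. Setting $T_2:=(T_1\setminus\{e_2\})\cup\{e_1\}$ therefore yields another spanning tree by the swap property. The trees $T_1$ and $T_2$ differ precisely by the exchange of $e_1$ and $e_2$, so
\[
w(T_2)-w(T_1)=w_{e_1}-w_{e_2},
\]
and the COVP forces $w_{e_1}=w_{e_2}$. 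Since $e_1,e_2\in C$ were arbitrary, every edge of $C$ has the same weight.

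There is no real obstacle here; the only point that needs care is the existence of a spanning tree containing the path $P=C\setminus\{e_1\}$, which is immediate from the connectedness of $G$ together with the forest-extension fact. Everything else is a direct application of the COVP to the two spanning trees produced by a single cycle swap.
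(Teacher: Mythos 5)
Your proof is correct and follows essentially the same exchange argument as the paper: take a spanning tree containing all of the cycle except one edge, swap that edge in for another cycle edge, and let the COVP equate the two tree weights. Your version merely spells out the details (existence of the tree containing $C\setminus\{e_1\}$, identification of the fundamental cycle with $C$) that the paper leaves implicit.
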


\begin{proof}
Let $C$ be a (simple) cycle in $G$ and $e$ be an arbitrary edge from $C$. There exists a spanning tree $T$ which
contains all edges of $C$ except $e$. By adding edge $e$ to $T$ and removing 
from $T$ in turn an arbitrary edge $f\ne e$ from $C$,
we obtain another spanning tree
$T'$. As the weights of $T$ and $T'$ are identical, it follows that
$w_e=w_f$. Hence all edges in $C$ have the same weight.
\end{proof}

To formulate the COVP characterization for the MST we need the following
definition.

\begin{definition}
Let $G=(V,E)$ be an undirected graph. The undirected graph $H=(V_H,E_H)$
which has a vertex $v_e$ for each edge $e\in E$ and an edge 
$\{v_e,v_f\}\in E_H$ if and only if $e$ and $f$ lie on a common simple 
cycle $C$ is called {\em cycle graph\/} of $G$.
\end{definition}

\begin{theorem}\label{theo:mst}
Let $I$ be an instance of the MST problem with graph $G$ and weights $w=(w_e)$. Let $H$ be the cycle graph of $G$ and let $V_1,\ldots,V_{\ell}$ be the vertex sets of connected components of $H$ and $E_1,\ldots,E_\ell$ be the corresponding sets of edges in $G$.
Then $I$ has the COVP if and only if there exist constants $\alpha_i$, $i=1,\ldots,\ell$ such that for all $e\in E_i$ $w_e=\alpha_i$, for all $i$.
\end{theorem}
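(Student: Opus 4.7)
The plan is to prove the two directions of the characterization by separating the ``local'' cycle condition from its ``global'' propagation through the connected components of $H$, leveraging Lemma~\ref{lem:mst} for one direction and the matroid exchange property for the other.

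For the forward direction ($\Rightarrow$), I would argue as follows. Assume $I$ has the COVP. Fix a connected component $V_i$ of $H$ and let $e,f\in E_i$ be two edges corresponding to vertices in $V_i$. By definition of the cycle graph, there is a path $v_{e_0},v_{e_1},\ldots,v_{e_k}$ in $H$ with $e_0=e$ and $e_k=f$, and for each consecutive pair $(v_{e_{j-1}},v_{e_j})$ the edges $e_{j-1}$ and $e_j$ lie on a common simple cycle in $G$. Applying Lemma~\ref{lem:mst} to each such cycle gives $w_{e_{j-1}}=w_{e_j}$, and chaining these equalities yields $w_e=w_f$. Hence the weight is constant on each $E_i$; this constant is the required $\alpha_i$.

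For the backward direction ($\Leftarrow$), assume that $w_e=\alpha_i$ for every $e\in E_i$. I want to show that any two spanning trees $T_1,T_2$ of $G$ have the same weight. The key tool is the matroid exchange property for the graphic matroid: if $e\in T_2\setminus T_1$, then adding $e$ to $T_1$ creates a unique cycle $C_e$, and there exists an edge $f\in C_e\cap (T_1\setminus T_2)$ such that $T_1':=(T_1\cup\{e\})\setminus\{f\}$ is again a spanning tree and $|T_1'\triangle T_2|=|T_1\triangle T_2|-2$. Since $e$ and $f$ both lie on the simple cycle $C_e$, they lie in the same connected component of $H$, so $w_e=w_f$ by assumption. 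Therefore $w(T_1')=w(T_1)$. Iterating this exchange at most $|T_1\triangle T_2|/2$ times turns $T_1$ into $T_2$ without changing total weight, so $w(T_1)=w(T_2)$, giving the COVP.

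The proof is quite short and the main technical point is the backward direction, where one must invoke (or prove on the fly) the matroid exchange lemma and observe that the two swapped edges lie on a common simple cycle, so that the hypothesis of constancy on each $E_i$ applies. The forward direction is essentially immediate from Lemma~\ref{lem:mst} together with the definition of connectivity in $H$ as a transitive closure of the ``lying on a common simple cycle'' relation; the only thing to check carefully is that one genuinely gets equality of weights along each edge of the path in $H$, which is exactly what Lemma~\ref{lem:mst} provides.
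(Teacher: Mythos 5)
Your proof is correct and follows essentially the same route as the paper: the necessity direction uses Lemma~\ref{lem:mst} chained along paths in the cycle graph $H$, and the sufficiency direction transforms one spanning tree into another by single edge exchanges in which the added and deleted edge lie on a common simple cycle, hence carry equal weight. You merely spell out more explicitly (via the exchange property and the symmetric-difference count) what the paper states briefly, so no gap remains.
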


\begin{proof}
To prove that the stated condition is sufficient, let $T$ and $T'$ 
be two spanning trees. It is easy to see that one can
move from $T$ to $T'$ by a sequence of moves which add an edge $e\in
T'\setminus T$ and
delete an edge $f\in T\setminus T'$ where $f$ lies on the unique cycle in 
$T\cup \{e\}$. As all edges on the cycle have the same weight, it follows 
from iterative application that $w(T)=w(T')$, where $w(T)$ denotes the sum of all $w_e$, $e\in T$.

To prove necessity of the stated condition, first observe that 
every bridge of $G$ (corresponds to an isolated 
vertex in $H$) is part of every
spanning tree and hence can have arbitrary weight. The claim now follows by
applying Lemma~\ref{lem:mst} for each cycle $C$ in G. 
\end{proof}

Note that if $H$ is connected (which is the case for example if $G$ is
$2$-connected), then the COVP holds if all edges have the same weight.

It is easy to see that the COVP characterization for the MST problem can be
carried over to the setting of matroids (circuits play the role of cycles and
bases play the role of spanning trees).

\subsection{The COVP for the shortest path problem} 

Given a weighted graph (undirected or directed) with the vertex set $V=\{1, 2, ..., n\}$ the shortest 
path problem is the problem of finding a path from vertex 1 to vertex $n$ 
such that the sum of edge weights along the path is minimized. In what follows we consider both the undirected and the directed version of 
the shortest path problem in a complete graph 
and provide COVP characterizations.

\begin{theorem}\label{theo:undirsp}
Let $G=(V, E)$ be the complete undirected graph with the vertex set $V=\{1,2,\ldots,n\}$, $n\ge 3$, and let 
$w(i,j)$ denote the nonnegative weight of the edge $(i,j)$. 
This instance of the undirected
shortest path problem has the COVP if and only if the weights are 
of the following form
\begin{equation}\label{shortest-undirected}
w(i,j)=w(j,i)=\begin{cases}
a&\text{if }i=1, j\neq n,\\
b&\text{if }i\neq 1, j=n,\\
a+b&\text{if }i=1,j=n,\\
0&\text{otherwise}
\end{cases}
\end{equation}
for some non-negative reals $a$ and $b$. 
\end{theorem}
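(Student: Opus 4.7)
The plan is to split the proof into the (easy) sufficiency direction and the necessity direction, treating $n=3$ separately from $n \geq 4$ for the latter. Throughout, a ``path'' means a simple path from vertex $1$ to vertex $n$.

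For sufficiency, I would observe that any such path has the form $1 \to i_1 \to i_2 \to \cdots \to i_r \to n$ with $i_1,\ldots,i_r$ distinct elements of $\{2,\ldots,n-1\}$ (possibly $r=0$, in which case we use the direct edge $(1,n)$). Under the weight scheme \eqref{shortest-undirected}, the initial edge $(1,i_1)$ contributes $a$, the final edge $(i_r,n)$ contributes $b$, all intermediate edges $(i_k, i_{k+1})$ contribute $0$, and the degenerate case $r=0$ contributes $a+b$ directly. Every path therefore has total weight exactly $a+b$, which proves the COVP.

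For necessity, assume every path has the same common value $W$. In the base case $n=3$, the only two paths are $1\to 3$ and $1\to 2\to 3$, so the sole constraint is $w(1,3)=w(1,2)+w(2,3)$; setting $a:=w(1,2)$ and $b:=w(2,3)$ matches \eqref{shortest-undirected} with no ``otherwise'' edges to worry about. For $n\geq 4$, I would fix arbitrary distinct $i,j \in \{2,\ldots,n-1\}$ and compare three pairs of paths. First, $1\to i\to n$ versus $1\to n$ yields
\[
w(1,i)+w(i,n)=w(1,n).
\]
Next, $1\to i\to j\to n$ versus $1\to i\to n$ yields $w(i,j)=w(i,n)-w(j,n)$, and $1\to j\to i\to n$ versus $1\to i\to n$ yields $w(i,j)=w(1,i)-w(1,j)$. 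Swapping the roles of $i$ and $j$ in the first identity gives $w(j,i)=w(j,n)-w(i,n)$, and combining with undirected symmetry $w(i,j)=w(j,i)$ forces
\[
w(i,j)=-w(i,j),
\]
hence $w(i,j)=0$ for all distinct $i,j\in\{2,\ldots,n-1\}$.

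Substituting $w(i,j)=0$ back into $w(i,j)=w(i,n)-w(j,n)$ shows $w(i,n)$ is constant on $\{2,\ldots,n-1\}$; call this constant $b$. Similarly, $w(i,j)=w(1,i)-w(1,j)=0$ shows $w(1,i)$ is constant on the same range; call it $a$. The first identity then gives $w(1,n)=a+b$, completing the verification of \eqref{shortest-undirected}. Nonnegativity of $a$ and $b$ follows from nonnegativity of the edge weights. The only delicate point in the plan is recognizing that the argument collapses when $n=3$ because no distinct pair $i,j$ in the internal vertex set exists; this is handled by noting that for $n=3$ the equation system is trivially consistent with the claimed form, so no additional case analysis beyond the isolated base case is needed.
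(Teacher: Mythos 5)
Your proof is correct and follows essentially the same route as the paper: both arguments compare the paths from $1$ to $n$ contained in $\{1,i,j,n\}$ to force $w(i,j)=0$ and the constancy of $w(1,i)$ and $w(i,n)$, with $w(1,n)=a+b$, and the sufficiency and $n=3$ cases are immediate. The only nitpick is the phrase ``swapping the roles of $i$ and $j$ in the first identity,'' which should refer to $w(i,j)=w(i,n)-w(j,n)$ rather than $w(1,i)+w(i,n)=w(1,n)$; the underlying computation is nevertheless valid.
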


\begin{proof}
Assume that every path from 1 to $n$ has the same weight. For $n=3$ the result
is straightforward. Assume $n\ge 4$ and take two distinct vertices 
$i$ and $j$ such that $1<i,j<n$. Consider the five paths from vertex 1 to 
vertex $n$ that only go through a subset of the vertices  $\{1,i,j,n\}$. By
assumption we get the following relations
\begin{align*}
	w(1,n)&=w(1,i)+w(i,j)+w(j,n)\\
	&=w(1,j)+w(j,i)+w(i,n)\\
	&=w(1,i)+w(i,n)\\
	&=w(1,j)+w(j,n).
\end{align*}
By adding and subtracting appropriate equations we get that $w(i,j)=0$,
$w(1,i)=w(1,j)$, $w(i,n)=w(j,n)$, so \eqref{shortest-undirected}
follows. 

Note that the converse trivially holds,  which concludes the proof.
\end{proof}

\begin{theorem}\label{theo:dirsp}	
Let $G=(V,E)$ be the complete directed acyclic graph with the vertex set
$V=\{1,2,\ldots,n\}$ and edge set $E=\{(i,j)\in V\times V\colon i<j\}$, and let $w(i,j)$ denote the weight of edge $(i,j)$. This instance of the directed
shortest path problem has the COVP 
if and only if there exists a real vector $A=(a_i)$ such that 
\begin{equation}\label{shortest-directed}
	w(i,j)=a_j-a_i \qquad\mbox{for all~~} i,j\in\{1,\ldots,n\},\  i<j.
\end{equation}
\end{theorem}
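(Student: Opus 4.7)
}
The sufficiency direction is a standard telescoping argument: if $w(i,j)=a_j-a_i$ for all edges, then any path $1=v_0<v_1<\cdots<v_k=n$ has total weight $\sum_{r=0}^{k-1}(a_{v_{r+1}}-a_{v_r})=a_n-a_1$, which is independent of the path, so the COVP holds. The case $n=2$ is also trivial (only one path). So the real content lies in the necessity direction, which I plan to handle by exhibiting the vector $A$ explicitly.

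Assuming the COVP holds with common value $W$ (the weight of every path from $1$ to $n$), I would normalize by setting $a_1:=0$ and defining
\[
a_i:=w(1,i)\qquad\text{for }i=2,\ldots,n.
\]
In particular $a_n=w(1,n)=W$ (by the direct-edge path). The task is then to verify $w(i,j)=a_j-a_i$ for every edge $(i,j)$ with $i<j$, which I would split into three cases: (a)~$i=1$, where the identity holds by the very definition of $a_j$; (b)~$1<i<j=n$, handled by applying the COVP to the two-edge path $1\to i\to n$ and the direct path $1\to n$, yielding $w(i,n)=W-a_i=a_n-a_i$; and (c)~$1<i<j<n$, handled by combining the COVP applied to $1\to j\to n$ (which gives $w(j,n)=a_n-a_j$, reusing case~(b)) and the COVP applied to $1\to i\to j\to n$, which expands to $a_i+w(i,j)+(a_n-a_j)=a_n$ and rearranges to $w(i,j)=a_j-a_i$.

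There is no real obstacle here: the completeness of the DAG guarantees that the three ``short'' paths $1\to n$, $1\to j\to n$, $1\to i\to j\to n$ all exist, and the COVP provides exactly the two linear equations needed to isolate $w(i,j)$. The proof is noticeably shorter than for the undirected Theorem~\ref{theo:undirsp}, because the acyclic orientation rules out the backwards traversal that forced most weights to vanish in the undirected case; here every edge weight is freely determined by the differences $a_j-a_i$, and the node potentials $a_i$ are unique up to an additive constant.
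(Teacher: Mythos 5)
Your proposal is correct and follows essentially the same route as the paper: the necessity direction compares the paths $1\to i\to j\to n$ and $1\to j\to n$ (plus $1\to i\to n$ versus $1\to n$) to obtain $w(i,j)=w(1,j)-w(1,i)$ and then sets $a_i:=w(1,i)$ (with $a_1=0$), and the sufficiency direction is the same telescoping argument. Your explicit case split for $i=1$ and $j=n$ merely spells out boundary cases the paper treats implicitly.
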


\begin{proof}
Assume that every path from vertex 1 to vertex $n$ has the same
weight. Consider the path composed of
edges $(1,i)$, $(i,j)$ and $(j,n)$. It has the same weight as the path 
composed  of edges $(1,j)$ and $(j,n)$. It follows that
$w(i,j)=w(1,j)-w(1,i)$. Set $a_i:=w(1,i)$ 
for $i=1,\ldots,n$, so that $w(i,j)=a_j-a_i$.

Now assume that for all $i<j$ the weight of $(i,j)$ can be represented as in
\eqref{shortest-directed} for some vector $A=(a_i)$. Consider an arbitrary path from
vertex 1 to $n$, and let $1=v_1<v_2<\cdots<v_k=n$ be all vertices on that
path. Then the weight of the path is 
\[
\sum_{i=1}^{k-1}w(v_i,v_{i+1})=\sum_{i=1}^{k-1}a_{v_{i+1}}-a_{v_i}=a_{v_k}-a_{v_1}=a_n-a_1.
\]
Since this number is independent of the choice of path,  \eqref{shortest-directed} is also sufficient and hence the statement holds.
\end{proof}

\subsection{The COVP for the minimum weight maximum cardinality 
matching problem} 

In the minimum weight maximum cardinality matching problem 
we are given an undirected graph $G=(V,E)$ and edge weights $w(i,j)$ for each edge $(i,j)\in E$. Our goal is to
find a matching for which the sum of edge weights is minimal among all matchings of maximal cardinality.

\begin{theorem}\label{theo:match}
Let $I$ be an instance of the minimum weight maximum cardinality matching on the
complete undirected graph $G$ with $n$ vertices and edge weights $w(i,j)$.
\begin{itemize}
\item[(i)] If $n$ is odd, $I$ has 
the COVP if and only if all edge weights are equal.
\item[(ii)] If $n$ is even, $I$ has the COVP if and only if 
there exists a real vector $A=(a_i)$ such that 
\begin{equation}\label{matchings}
w(i,j)=a_i+a_j \qquad\mbox{for all~} i\neq j. 
\end{equation}
\end{itemize}
\end{theorem}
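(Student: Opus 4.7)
The plan is to treat sufficiency and necessity separately in each parity case. Sufficiency is immediate in both (i) and (ii): every maximum matching has $\lfloor n/2\rfloor$ edges, so in (i) the total weight equals $c(n-1)/2$ when every edge weighs $c$, and in (ii) the total weight of any perfect matching telescopes to $\sum_{i=1}^{n}a_i$ when $w(i,j)=a_i+a_j$. So I focus entirely on necessity.

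For necessity in the odd case (i) with $n\ge 5$, my approach is a three-vertex swap argument. For any three distinct vertices $i,j,k$, the set $V\setminus\{i,j,k\}$ has even cardinality $n-3$, so the complete graph on those vertices admits a perfect matching $N$. Then $N\cup\{(j,k)\}$ is a maximum matching missing $i$ and $N\cup\{(i,k)\}$ is a maximum matching missing $j$, so the COVP forces $w(j,k)=w(i,k)$. Iterating this identity through any fixed vertex shows that every edge has the same weight. The base case $n=3$ is trivial since each edge is itself a maximum matching, so all three edges must have equal weight.

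For necessity in the even case (ii) with $n\ge 4$, I would use a four-vertex swap. Choose any four distinct vertices $i,j,k,\ell$ and a perfect matching $N$ on the complete graph induced by the remaining $n-4$ vertices (which exists since $n-4$ is even). The three augmentations $N\cup\{(i,j),(k,\ell)\}$, $N\cup\{(i,k),(j,\ell)\}$, $N\cup\{(i,\ell),(j,k)\}$ are all perfect matchings, so the COVP yields the quadrilateral identity
\[
w(i,j)+w(k,\ell)=w(i,k)+w(j,\ell)=w(i,\ell)+w(j,k).
\]
To extract a sum decomposition, I would specialize this identity to the vertex set $\{1,i,j,k\}$ and define $f(i,j):=w(i,j)-w(1,i)-w(1,j)$ for $i,j\in\{2,\ldots,n\}$, $i\ne j$. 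The identity then reduces to $f(j,k)=f(i,k)$ for every triple of distinct indices $i,j,k\in\{2,\ldots,n\}$, which forces $f$ to be a single constant, say $-2a_1$. Setting $a_i:=w(1,i)-a_1$ for $i\ne 1$ then gives $w(i,j)=a_i+a_j$ on every edge incident to vertex $1$ by definition, and on every other edge by construction of $a_1$. The remaining case $n=2$ is trivial because there is only one edge and one can split its weight arbitrarily.

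The main obstacle in both directions is guaranteeing that the local swaps described above are actually realized by genuine maximum matchings of the entire graph, which is precisely why the parity assumptions enter: in (i) I need $|V\setminus\{i,j,k\}|$ to be even, and in (ii) I need $|V\setminus\{i,j,k,\ell\}|$ to be even, so that the complementary ``tail'' matching $N$ exists on the complete induced subgraph. Once the three-vertex identity (respectively, the quadrilateral identity) is established, the passage to a global constant weight (respectively, a sum decomposition) is a routine algebraic manipulation.
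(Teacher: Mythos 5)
Your proposal is correct and follows essentially the same route as the paper: the triangle swap through a perfect matching on $V\setminus\{i,j,k\}$ in the odd case, and the quadrilateral identity $w(i,j)+w(k,\ell)=w(i,k)+w(j,\ell)=w(i,\ell)+w(j,k)$ in the even case. The only difference is cosmetic: where the paper invokes the standard off-diagonal sum-matrix characterization plus symmetry to get $w(i,j)=a_i+a_j$, you carry out that algebra explicitly via $f(i,j)=w(i,j)-w(1,i)-w(1,j)$, which is a fine (and slightly more self-contained) way to finish.
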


\begin{proof}

Let $n$ be odd. Suppose that every maximum cardinality 
matching has the same weight.
 Let $i,j,k\in V$ be three distinct vertices. Let $M$ be a maximum cardinality 
matching on the vertex set $V\setminus\{i,j,k\}$. By adding an arbitrary 
edge from the triangle defined by $i,j$ and $k$ to $M$ we obtain a 
maximum cardinality matching on the initial instance. 
By assumption it follows that every edge in the triangle defined by 
$i,j$ and $k$ has the same weight. Hence the statement follows.

Let $n$ be even. Assume that every perfect matching has 
the same weight. 
Since each of the two pairs of edges $(i,j)$, $(k,l)$ and $(i,l)$, $(j,k)$
can be identically extended to a perfect matching 
it follows that
\[w(i,j)+w(k,l)=w(i,l)+w(j,k)\]
for all distinct $i,j,k,l$. Hence, there exist two real 
vectors $U=(u_i)$ and $V=(v_i)$ such that $w(i,j)=u_i+v_j$ for all $i\neq j$. 
Since the weight matrix has to be symmetric ($G$ is undirected), 
there exists a real 
vector $A=(a_i)$ such that 
\eqref{matchings} holds. 

Note that \eqref{matchings} is clearly a sufficient condition for the
COVP.
\end{proof}

%%%%%%%%%%%%%%%%%%%%%%%%%%%%%%%%%%%%%%%%%%%%%%%%%%%%%%%%

\section{Conclusion}

Our goal was to characterize the set of instances with the constant objective value property (COVP), i.e.\@ to investigate the space of all instances for which every feasible solution has the same objective value. The COVP is closely connected to the notion of admissible transformations, a topic studied by various authors.

As our central result, we showed that the COVP instances of the planar and the axial $d$-dimensional assignment problem are characterized by sum-decomposable arrays with the corresponding parameters. We provided a counterexample which shows that these results do not carry over to general $d$-dimensional assignment problem. The following remains a challenging open question: Does sum decomposability characterize the COVP instances in all cases of the multidimensional assignment problems for which feasible solutions exist and size of the instance is sufficiently large?

We used the results for the axial $d$-dimensional assignment problem to characterize the COVP instances for the axial $d$-dimensional transportation problem. 

Furthermore, as simpler side results, we characterized the COVP instances for the following classical combinatorial optimization problems: the minimum spanning tree, the shortest path problem in undirected and directed graphs and the minimum weight cardinality matching problem in complete graphs.

\section*{Acknowledgement}
This research was supported by the Austrian Science Fund (FWF): 
W1230, Doctoral Program ``Discrete Mathematics''.

The authors are grateful to Gerhard J. Woeginger for fruitful discussions on
the COVP topic and for suggesting a simplification of our original COVP
characterization for the 3-dimensional planar assignment problem.

%%%%%%%%%%%%%%%%%%%%%%%%%%%%%%%%%%%%%%%%%%%%%%%%%%%%%%%%

\bibliographystyle{plain}

\end{document}